\title{On the non-vanishing conjecture and existence of log minimal models}
\author{Kenta Hashizume}
\date{2016/11/20, version 0.03}
\keywords{non-vanishing conjecture, log minimal model}
\subjclass[2010]{14E30}
\address{Department of Mathematics, Graduate School of Science, 
Kyoto University, Kyoto 606-8502, Japan}
\email{hkenta@math.kyoto-u.ac.jp}
\newtheorem{thm}{Theorem}[section]
\newtheorem{lem}[thm]{Lemma}
\newtheorem{cor}[thm]{Corollary}
\newtheorem{conj}[thm]{Conjecture}
\theoremstyle{definition}
\newtheorem{defin}[thm]{Definition}
\newtheorem{rem}[thm]{Remark}
\newtheorem*{ack}{Acknowledgments} 
\newtheorem{say}[thm]{}
\newtheorem{step}{Step}
\newtheorem{step2}{Step}
\newtheorem{case}{Case}
\begin{document}

\maketitle

\begin{abstract}
We prove that the non-vanishing conjecture and the log minimal model conjecture for projective log canonical pairs can be reduced to the non-vanishing conjecture for smooth projective varieties such that the boundary divisor is zero. 
\end{abstract}

\tableofcontents

\section{Introduction}
Throughout this paper we will work over the complex number field, and we denote Conjecture $\bullet$ with ${\rm dim}\, X = n$ (resp.~${\rm dim}\, X \leq n$) by Conjecture $\bullet_{n}$ (resp.~Conjecture $\bullet_{\leq n}$).  

In this paper we deal with the following two conjectures.

\begin{conj}[Non-vanishing]\label{conjnonvanish}
Let $(X,\Delta)$ be a projective log canonical pair. 
If $K_{X}+\Delta$ is pseudo-effective, then there is an effective $\mathbb{R}$-divisor $D$ such that $K_{X}+\Delta \sim_{\mathbb{R}}D$.
\end{conj}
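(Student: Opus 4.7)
The plan is to reduce the statement to the non-vanishing conjecture for smooth projective varieties with zero boundary, as advertised in the abstract; the conjecture in its full form is open, so a complete proof is out of reach, but the reduction can be set up using standard techniques of the modern minimal model program. I would proceed in two main stages: first reduce the log canonical case to the klt case, and then reduce the klt case to the smooth case with zero boundary.

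First I would replace $(X,\Delta)$ by a $\mathbb{Q}$-factorial dlt model $(X',\Delta')$ via a dlt blowup. Since $K_{X'}+\Delta'$ is the pullback of $K_X+\Delta$ plus effective exceptional divisors, pseudo-effectivity is preserved, and an effective divisor $D'$ with $K_{X'}+\Delta'\sim_{\mathbb{R}}D'$ pushes down to the required $D$ on $X$. If $(X',\Delta')$ is not klt, I would pick a component $S$ of $\lfloor\Delta'\rfloor$ and perform divisorial adjunction to produce a lower-dimensional pair $(S,\Delta_S)$ with $K_S+\Delta_S=(K_{X'}+\Delta')|_S$. Using induction on the dimension, I would then apply the conjecture to $(S,\Delta_S)$ and attempt to lift the resulting section back to $X'$ via an extension theorem of Hacon--McKernan--Xu type. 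The main obstacle here is that pseudo-effectivity of $K_{X'}+\Delta'$ need not restrict to pseudo-effectivity on $S$, so one first has to run a $(K_{X'}+\Delta')$-MMP with scaling of a suitable ample divisor to put the pair in a position where either the outcome is a Mori fiber structure (where the conclusion is straightforward) or the restriction to $S$ behaves well.

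For the klt stage, I would take a log resolution $f\colon Y\to X'$ and write $K_Y+\Gamma=f^{*}(K_{X'}+\Delta')+E$ with $E$ effective and $f$-exceptional and $\Gamma\geq 0$ having coefficients strictly less than one. Pseudo-effectivity transfers to $K_Y+\Gamma$, and any effective $D_Y$ with $K_Y+\Gamma\sim_{\mathbb{R}}D_Y$ pushes forward to the desired equivalence on $X'$. To finish I would decrease $\Gamma$ as much as possible while preserving pseudo-effectivity, passing if necessary to further birational modifications, so as to land in the smooth zero-boundary case supplied by hypothesis, and then pull the resulting section back through the chain of modifications.

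The hardest point, I expect, will be the lifting of sections across a dlt model or across a non-klt center, because the extension theorems available in the literature typically require a plt or klt neighborhood of the center and produce numerical, rather than genuine $\mathbb{R}$-linear, equivalence. Controlling $\mathbb{R}$-linear equivalence globally in the lc category while avoiding any appeal to the full abundance conjecture will presumably force a careful and repeated use of MMP with scaling and, in the spirit of much recent work, a reformulation of the intermediate steps in terms of generalized polarized pairs.
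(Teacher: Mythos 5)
Your outline shares the opening moves with the paper (dlt blow-up, split into a non-klt case handled by a component $S$ of $\lfloor\Delta\rfloor$ versus a klt case, induction on dimension), but the engine you propose for the reduction step --- adjunction to a non-klt center followed by a Hacon--M\textsuperscript{c}Kernan--Xu-type \emph{extension theorem} to lift the section back to $X$ --- is not what the paper does, and, as you yourself flag, it does not work in the lc category: the available extension results need plt/klt control of a neighborhood of the center and control of $\mathbb{R}$-linear (not merely numerical) equivalence, precisely the two things the conjecture does not give you. This is a genuine gap, not merely a technical inconvenience to be massaged away with generalized pairs.

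The idea you are missing is that the paper never lifts sections across a restriction at all. Instead, it uses the behavior of the pseudo-effective threshold $\tau(X,t\Delta;A)$ (for a general ample $A$) together with the ACC for log canonical thresholds and the ACC for numerically trivial pairs of Hacon--M\textsuperscript{c}Kernan--Xu to find an $\epsilon<1$ and, after running MMP with scaling, a contraction $X'\to Y$ with $\dim Y<\dim X'$ and $K_{X'}+\Delta_{X'}\sim_{\mathbb{R},\,Y}0$. In the lc case a component $T$ of the round-down dominates $Y$, so $K_T+\mathrm{Diff}$ is the pullback of the same divisor on $Y$ as $K_{X'}+\Delta_{X'}$; non-vanishing on $T$ (induction) then literally \emph{is} non-vanishing on $X'$, with no extension step. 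In the klt case one applies Ambro's canonical bundle formula to the fibration. Your klt stage (``decrease $\Gamma$ as much as possible and land in the smooth zero-boundary case'') has the same problem: when $\tau(X,0;\Delta)>0$ one cannot decrease $\Gamma$ to zero while staying pseudo-effective, so one never reaches the hypothesis of Conjecture~\ref{conjnonvanishsmooth} directly; again the paper routes through a fibration and the canonical bundle formula. Finally, getting from non-vanishing to the existence of a log minimal model uses Birkar's theorem, and establishing that the models over the various fibrations assemble into a log minimal model of $(X,\Delta)$ itself requires a separate comparison argument (via $N_\sigma$ and \cite[Corollary~3.7]{birkar-flip}) that your outline does not address.
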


\begin{conj}[Existence of log minimal model]\label{conjlogminimalmodel}
Let $(X,\Delta)$ be a projective log canonical pair. 
If $K_{X}+\Delta$ is pseudo-effective, then $(X,\Delta)$ has a log minimal model. 
\end{conj}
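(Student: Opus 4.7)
The plan is to derive Conjecture \ref{conjlogminimalmodel} from Conjecture \ref{conjnonvanish} together with the machinery of the minimal model program. Since Conjecture \ref{conjnonvanish} provides an effective $\mathbb{R}$-divisor $\mathbb{R}$-linearly equivalent to $K_{X}+\Delta$, the problem becomes one of running and terminating an appropriate MMP, which is the natural route to a log minimal model.

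First I would reduce to a setting where MMP techniques apply cleanly. Specifically, I would take a dlt blow-up $f\colon Y \to X$ so that $(Y,\Gamma)$ is $\mathbb{Q}$-factorial dlt with $K_{Y}+\Gamma = f^{*}(K_{X}+\Delta) + E$, where $E \geq 0$ is $f$-exceptional and supported on divisors of log discrepancy $0$. A standard argument (see e.g.\ the dlt-modification lemma of Kollár--Kovács / Fujino) shows that a log minimal model of $(Y,\Gamma)$ yields a log minimal model of $(X,\Delta)$, and that $K_{Y}+\Gamma$ is pseudo-effective iff $K_{X}+\Delta$ is. So we may and do assume $(X,\Delta)$ is $\mathbb{Q}$-factorial dlt.

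Next I would invoke Conjecture \ref{conjnonvanish} to obtain an effective $\mathbb{R}$-divisor $D$ with $K_{X}+\Delta \sim_{\mathbb{R}} D$. With $D$ in hand, the idea is to move into the framework of Birkar--Cascini--Hacon--McKernan and its lc/dlt extensions by Birkar and Hacon--Xu: one picks a sufficiently general ample $\mathbb{Q}$-divisor $A$ and runs a $(K_{X}+\Delta)$-MMP with scaling of $A$. The presence of the effective divisor $D$ allows one to replace $\Delta$ up to $\mathbb{R}$-linear equivalence by a boundary $\Delta'$ with $\lfloor \Delta' \rfloor$ controlled and $\operatorname{Supp} D \subseteq \operatorname{Supp} \Delta'$, so that after a perturbation the pair becomes klt with big boundary, a setting where BCHM gives existence of the requisite flips and (via special termination on lc centers) termination of the MMP with scaling. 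Extracting the resulting model and pulling back gives a log minimal model of the original $(X,\Delta)$.

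The main obstacle is the last step: termination of the $(K_{X}+\Delta)$-MMP with scaling for genuinely lc (or even dlt) pairs in dimension $\geq 4$ is not known unconditionally, so one cannot simply quote BCHM. The substantive work is to exploit $K_{X}+\Delta \sim_{\mathbb{R}} D \geq 0$ to absorb $D$ into the boundary and reduce to a klt-with-big-boundary situation where BCHM applies, and to handle the lc centers via special termination. A secondary technical point is to make sure the dlt-modification step in the beginning preserves the hypothesis of Conjecture \ref{conjnonvanish} — here one uses that $K_{Y}+\Gamma - E = f^{*}(K_{X}+\Delta)$ is pseudo-effective, so an effective representative of $K_{X}+\Delta$ pulls back to one of $K_{Y}+\Gamma - E$, and adding $E$ restores effectivity on $Y$.
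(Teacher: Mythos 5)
The statement you have been asked to address is a \emph{conjecture}; it is open in dimension $\geq 5$, and the paper does not prove it unconditionally. What the paper actually establishes (Theorem \ref{thmmain}, Corollary \ref{cor2}) is that Conjecture \ref{conjnonvanishsmooth} implies Conjecture \ref{conjlogminimalmodel}. Your proposal, by contrast, is a sketch of the implication ``Conjecture \ref{conjnonvanish} $\Rightarrow$ Conjecture \ref{conjlogminimalmodel},'' which is Birkar's theorem (\cite[Corollary 1.7]{birkar-existII}), quoted here as Theorem \ref{thmbirkar}; the paper cites it and does not reprove it. Your outline of that implication (dlt modification, absorbing the effective divisor $D$ into the boundary, BCHM plus special termination) is directionally right, but it omits the essential inductive structure. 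Special termination on lc centers requires the log minimal model conjecture for the lc centers themselves, i.e.\ Conjecture \ref{conjlogminimalmodel}$_{\leq d-1}$, which is exactly the inductive hypothesis in Theorem \ref{thmbirkar}. You flag termination as ``the main obstacle'' but do not supply the resolution, which is this induction on dimension; as written the step would fail.

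More substantively, your proposal does not touch the part of the argument that is actually new in the paper. Given Theorem \ref{thmbirkar}, the real work in the proof of Theorem \ref{thmmain} is to reduce Conjecture \ref{conjnonvanish}$_d$ (non-vanishing for arbitrary projective lc pairs with $\mathbb{R}$-boundaries) to Conjecture \ref{conjnonvanishsmooth}$_{\leq n}$ under the inductive hypotheses Conjecture \ref{conjnonvanish}$_{\leq d-1}$ and Conjecture \ref{conjlogminimalmodel}$_{\leq d-1}$. The paper does this by a case analysis on the pseudo-effective threshold $\tau(X,B;S)$, exploits the ACC theorems \ref{thmacclct} and \ref{thmglobalacc} to show that $\tau(X,t\Delta;A)$ is a linear function of $t$ near $t=1$ (Remark \ref{rempethre}), uses this to construct lc-trivial fibrations via Mori fiber spaces and \cite[Corollary 1.1.5]{bchm}, applies adjunction or the canonical bundle formula to drop to lower dimension, and then carries out a subtle comparison of the models $X_k'$, $X''$, $X'''$ using $N_\sigma$ to produce a weak lc model. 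None of this appears in your proposal; you assume the stronger Conjecture \ref{conjnonvanish} as given, which is exactly the hypothesis the paper is trying to weaken.
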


Birkar \cite{birkar-existII} proved that Conjecture \ref{conjnonvanish}$_{n}$ implies Conjecture \ref{conjlogminimalmodel}$_{n}$. 
On the other hand, Gongyo \cite{gongyo-nonvanishing} proved that Conjecture \ref{conjnonvanish}$_{n}$ for Kawamata log terminal pairs with boundary $\mathbb{Q}$-divisors implies Conjecture \ref{conjnonvanish}$_{n}$ for log canonical pairs with boundary $\mathbb{R}$-divisors assuming the abundance conjecture for $d$-dimensional log canonical pairs with $d\leq n-1$ . 
Today Conjecture \ref{conjnonvanish}$_{\leq3}$ and Conjecture \ref{conjlogminimalmodel}$_{\leq 4}$ is proved (Conjecture \ref{conjlogminimalmodel}$_{4}$ was first proved by Shokurov \cite{shokurov} and another proof was given by Birkar \cite{birkarzariski}) but Conjecture \ref{conjnonvanish} and Conjecture \ref{conjlogminimalmodel} are still open in higher dimension. 

%Siu \cite{siu} announced outline of the proof of the generalized abundance conjecture for compact smooth complex variety, which is stronger than Conjecture \ref{conjnonvanish}, but the details are still not completed. 
In this paper we study the relation between the above two conjectures and the following special case of Conjecture \ref{conjnonvanish}. 

\begin{conj}[Non-vanishing for smooth varieties]\label{conjnonvanishsmooth}
Let $X$ be a smooth projective variety. 
If $K_{X}$ is pseudo-effective, then there is an effective $\mathbb{Q}$-divisor $D$ such that $K_{X} \sim_{\mathbb{Q}}D$.
\end{conj}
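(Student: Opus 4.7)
The plan is to attack Conjecture \ref{conjnonvanishsmooth} by stratifying according to the numerical dimension $\nu(K_X)$ of the pseudo-effective canonical class, and combining the minimal model program with positivity theorems in each stratum. Since Conjecture \ref{conjnonvanishsmooth} is a special case of Conjecture \ref{conjnonvanish} with $\Delta = 0$ and $X$ smooth, it is already settled in dimension at most three, so the interesting range is $\dim X \geq 4$.

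First, I would try to replace $X$ by a $\mathbb{Q}$-factorial minimal model. Running a $K_X$-MMP, if it terminates, produces a birational model $X'$ on which $K_{X'}$ is nef; since the singularities produced are terminal, any effective $\mathbb{Q}$-divisor $\mathbb{Q}$-linearly equivalent to $K_{X'}$ pulls back to one on $X$. Modulo this reduction we may assume $K_X$ itself is nef. The MMP at this level of generality is conjectural, so in practice we would invoke BCHM when $K_X$ is big and appeal to induction plus special-termination results otherwise.

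Second, I would split into cases by $\nu(K_X)$. If $\nu(K_X) = \dim X$ then $K_X$ is big, and Kodaira's lemma immediately gives the desired effective $\mathbb{Q}$-divisor. If $\nu(K_X) = 0$ then Nakayama's $\sigma$-decomposition is trivial, and known abundance results in numerical dimension zero yield sections of $mK_X$ for some $m \gg 0$. Both extremes are classical.

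The main obstacle is the intermediate case $0 < \nu(K_X) < \dim X$. The natural strategy is to construct an Iitaka-type fibration $f \colon X \to Z$ associated to $K_X$ and induct on the dimensions of fiber and base, using a canonical bundle formula of Kawamata together with Ambro--Fujino semi-positivity of the moduli divisor to transfer non-vanishing on $Z$ back to $X$. The circularity is that such a fibration presupposes the very sections one is trying to construct; bypassing it would require an unconditional construction of a canonical fibration attached to a numerically non-trivial pseudo-effective class, which is precisely why the conjecture remains open for $\dim X \geq 4$ and why the authors of this paper instead choose to \emph{reduce} Conjectures \ref{conjnonvanish} and \ref{conjlogminimalmodel} to this statement rather than to settle it directly.
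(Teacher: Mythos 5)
The statement you were asked to prove is Conjecture \ref{conjnonvanishsmooth}, which the paper treats as an \emph{open} conjecture and does not prove. There is no ``paper's own proof'' to compare against: the paper's main result, Theorem \ref{thmmain}, shows that Conjecture \ref{conjnonvanishsmooth}$_n$ implies Conjecture \ref{conjnonvanish}$_{\leq n}$ and Conjecture \ref{conjlogminimalmodel}$_{\leq n}$, i.e., it \emph{reduces} the general log canonical non-vanishing and log minimal model conjectures to this special case, rather than resolving the special case itself.

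Your outline is an honest diagnosis rather than a proof, and you say so explicitly. The gaps you flag are real: termination of the MMP is not known in the generality you would need in order to pass to a nef model when $K_X$ is not big, and the key obstruction in the range $0 < \nu(K_X) < \dim X$ is precisely that one would need a fibration attached to the numerical class of $K_X$ \emph{before} having any sections, which is circular. Your case split by numerical dimension is the standard way practitioners think about this problem (the extremes $\nu = 0$ and $\nu = \dim X$ are indeed classical via Nakayama's work and Kodaira's lemma respectively), but nothing in your sketch bridges the middle range, and no bridge is currently known. In short: your proposal does not establish the statement, but it is not expected to, and your concluding remark that the authors therefore reduce to Conjecture \ref{conjnonvanishsmooth} instead of proving it is exactly the correct reading of the paper.
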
 

The following theorem is the main result of this paper. 

\begin{thm}\label{thmmain}
Conjecture \ref{conjnonvanishsmooth}$_{n}$ implies Conjecture \ref{conjnonvanish}$_{\leq n}$ and Conjecture \ref{conjlogminimalmodel}$_{\leq  n}$. 
\end{thm}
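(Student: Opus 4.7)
We argue by induction on $n$, assuming Conjecture~\ref{conjnonvanishsmooth}$_n$ together with Theorem~\ref{thmmain} in all dimensions $\leq n-1$. By the result of Birkar \cite{birkar-existII} cited in the introduction, Conjecture~\ref{conjnonvanish}$_n$ implies Conjecture~\ref{conjlogminimalmodel}$_n$, so it suffices to establish Conjecture~\ref{conjnonvanish}$_n$. The strategy is to carry out a sequence of reductions transforming an arbitrary projective lc pair with $\mathbb{R}$-boundary into a smooth projective variety with trivial boundary, and then to quote Conjecture~\ref{conjnonvanishsmooth}$_n$.

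Let $(X,\Delta)$ be a projective lc pair with $K_X+\Delta$ pseudo-effective. First, by passing to a $\mathbb{Q}$-factorial dlt modification we may assume $(X,\Delta)$ is $\mathbb{Q}$-factorial dlt; this does not affect existence of an effective $\mathbb{R}$-divisor $\mathbb{R}$-linearly equivalent to $K_X+\Delta$. If $\lfloor\Delta\rfloor\neq 0$, we choose an lc centre $S$ and apply divisorial adjunction to produce a pair $(S,\Delta_S)$ of dimension $<n$, to which the inductive hypothesis supplies an effective $\mathbb{R}$-divisor $D_S\sim_{\mathbb{R}} K_S+\Delta_S$. Using the MMP with scaling, combined with the existence of log minimal models in dimension $<n$ (which we already have inductively via Conjecture~\ref{conjlogminimalmodel}$_{\leq n-1}$), we lift $D_S$ from $S$ to an effective representative on $X$; this reduces us to the klt case. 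A standard Shokurov polytope argument then writes $\Delta$ as a convex combination $\sum t_i\Delta_i$ of $\mathbb{Q}$-boundaries with each $(X,\Delta_i)$ klt and $K_X+\Delta_i$ pseudo-effective, further reducing us to klt pairs with $\mathbb{Q}$-boundary.

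Now suppose $(X,\Delta)$ is klt with $\Delta$ a $\mathbb{Q}$-divisor and $K_X+\Delta$ pseudo-effective. Take a log resolution $f\colon Y\to X$ and write
\[
K_Y+B_Y \;=\; f^{*}(K_X+\Delta)+E_Y
\]
with $B_Y,E_Y\geq 0$ effective $\mathbb{Q}$-divisors sharing no common component and $\Supp B_Y\cup\Supp E_Y$ of simple normal crossing. It suffices to produce an effective $D_Y\sim_{\mathbb{Q}}K_Y+B_Y$. If $K_Y$ is itself pseudo-effective, Conjecture~\ref{conjnonvanishsmooth}$_n$ directly yields $D_0\geq 0$ with $K_Y\sim_{\mathbb{Q}}D_0$, and then $D_0+B_Y$ answers the question. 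If instead $K_Y$ is not pseudo-effective, then $Y$ is uniruled by Boucksom--Demailly--P\u{a}un--Peternell, so $Y$ admits a nontrivial MRC-type fibration onto a base $Z$ with $\dim Z<n$; running an appropriate $(K_Y+B_Y)$-MMP together with the Ambro--Kawamata canonical bundle formula translates non-vanishing for $(Y,B_Y)$ into non-vanishing for a klt log pair on $Z$, to which the induction hypothesis applies.

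The principal obstacle I expect is the final case of the preceding step, in which $K_Y$ fails to be pseudo-effective and Conjecture~\ref{conjnonvanishsmooth}$_n$ cannot be invoked directly. Here we must combine MMP techniques (termination with scaling, existence of minimal models in dimension $<n$) with the canonical bundle formula so as to guarantee that, after running the MMP and passing to the base of a suitable fibration, the resulting log canonical divisor is still pseudo-effective and still falls within the scope of the inductive hypothesis. The delicate points are controlling the behaviour of $B_Y$ and $E_Y$ under the MMP, and ensuring that effective pullbacks of the output downstairs produce an effective representative of the original divisor $K_X+\Delta$ upstairs.
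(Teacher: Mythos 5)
Your high-level scaffolding matches the paper's: induct on dimension, reduce Conjecture~\ref{conjlogminimalmodel} to Conjecture~\ref{conjnonvanish} via Birkar's results, pass to a $\mathbb{Q}$-factorial dlt pair, and split according to whether the reduced part of the boundary contributes. However, the two places where you compress the argument into a phrase are precisely where the paper has to do real work, and as written there are genuine gaps.

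The first gap is in the klt, $\mathbb{Q}$-boundary step. You propose that if $K_Y$ is not pseudo-effective then $Y$ is uniruled, one takes an ``MRC-type fibration'' $Y\dashrightarrow Z$, and one feeds the canonical bundle formula into the induction. But the MRC fibration is adapted to $K_Y$, not to $K_Y+B_Y$: its fibers are rationally connected, while what you need is a fibration along which $K_Y+B_Y$ is relatively \emph{trivial} so that the Ambro--Kawamata canonical bundle formula applies. There is no reason the MRC base gives that, and you give no mechanism for producing a $(K_Y+B_Y)$-trivial fibration of smaller-dimensional base. This is exactly what the paper constructs, and it is the technical heart of the argument: one rescales so that $\tau(X,0;\Delta)=1$, runs the $(K_X+u_k\Delta)$-MMP with scaling of a carefully normalized ample $A$ for $u_k\nearrow 1$ to obtain Mori fiber spaces $(X_k,u_k\Delta_{X_k})\to Z_k$, and then invokes the ACC for log canonical thresholds and the ACC for numerically trivial pairs (Theorems~\ref{thmacclct} and~\ref{thmglobalacc}) to extract an index for which $(X_k,\Delta_{X_k})$ is lc and $K_{X_k}+\Delta_{X_k}$ is relatively \emph{numerically} trivial over $Z_k$. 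That is the source of the lc-trivial fibration, and it is not supplied by any version of the MRC quotient.

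The second gap is in the dlt step with $S=\lfloor\Delta\rfloor\neq 0$. You say you ``lift $D_S$ from $S$ to an effective representative on $X$'' using MMP with scaling and lower-dimensional minimal models. This lifting is not automatic: non-vanishing on a divisorial lc centre does not by itself give non-vanishing on the ambient variety (one would need an extension theorem for sections, which is not available at this level of generality and not provided by the induction hypothesis). In the paper, the lifting is made possible only after one has a fibration $X'\to Y$ with $K_{X'}+\Delta_{X'}\sim_{\mathbb{R},Y}0$ and, crucially, a component $T$ of the transform of $S$ \emph{dominating} $Y$; then the pullback divisor on $Y$ is controlled by non-vanishing for $(T,\mathrm{Diff})$ via adjunction. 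Without constructing such a fibration and ensuring $S$ dominates the base, the reduction from $X$ to $S$ does not close. So while your outline points in the right direction, the substance needed to make either half of the dichotomy work --- the pseudo-effective threshold analysis, the ACC inputs, and the resulting lc-trivial fibration with the dominance property --- is absent.
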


We remark that in Theorem \ref{thmmain} we do not have any assumptions about the abundance conjecture.  
The proof of Theorem \ref{thmmain} heavily depends on the arguments in \cite{hashizume}. 
%In \cite{hashizume} all arguments were carried out in the framework of log canonical pairs with boundary $\mathbb{Q}$-divisors, but in this paper the boundary divisor of any log canonical pair may be an $\mathbb{R}$-divisor. 
A key ingredient is construction of fibrations with relatively trivial log canonical divisors. 
More precisely, for a given log canonical pair $(X,\Delta)$ and a general ample divisor $A$, we observe behavior of the pseudo-effective threshold $\tau(X,t\Delta;A)$ as a function of $t$. 
When $K_{X}+\Delta$ is pseudo-effective and $t$ moves in $[\epsilon,1]$ for an $\epsilon>0$ sufficiently close to one, we see that $\tau(X,t\Delta;A)$ is a linear function of $t$ (see Remark \ref{rempethre}). 
By using this observation we construct the fibrations mentioned as above. 
For details, see the proof of Theorem \ref{thmmain}. 

From Theorem \ref{thmmain} we immediately obtain the following corollaries. 

\begin{cor}\label{cor1}
Conjecture \ref{conjnonvanish}$_{n}$ and Conjecture \ref{conjnonvanishsmooth}$_{n}$ are equivalent. 
\end{cor}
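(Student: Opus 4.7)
The forward implication is essentially immediate from Theorem \ref{thmmain}: assuming Conjecture \ref{conjnonvanishsmooth}$_{n}$, the theorem gives Conjecture \ref{conjnonvanish}$_{\leq n}$, and in particular Conjecture \ref{conjnonvanish}$_{n}$. So no work is needed here beyond invoking the main theorem.

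For the reverse implication, the plan is to observe that Conjecture \ref{conjnonvanishsmooth}$_{n}$ is essentially a special case of Conjecture \ref{conjnonvanish}$_{n}$. Given a smooth projective variety $X$ of dimension $n$ with $K_{X}$ pseudo-effective, apply Conjecture \ref{conjnonvanish}$_{n}$ to the log canonical pair $(X,0)$ to obtain an effective $\mathbb{R}$-divisor $D$ with $K_{X}\sim_{\mathbb{R}}D$.

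The only subtle point is upgrading the $\mathbb{R}$-linear equivalence to a $\mathbb{Q}$-linear equivalence, which I expect to be the main (but still routine) obstacle. Since $K_{X}$ is a $\mathbb{Z}$-divisor, the condition $K_{X}\sim_{\mathbb{R}}D'$ with $D'$ effective and $\operatorname{Supp} D'\subseteq \operatorname{Supp} D$ cuts out a nonempty rational polytope inside the finite-dimensional $\mathbb{R}$-vector space spanned by the components of $D$ modulo $\mathbb{Z}$-linear equivalence to $K_{X}$. Picking any rational point of this polytope yields an effective $\mathbb{Q}$-divisor $D'$ with $K_{X}\sim_{\mathbb{Q}}D'$, establishing Conjecture \ref{conjnonvanishsmooth}$_{n}$.

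Combining the two implications gives the equivalence.
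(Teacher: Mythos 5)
Your proof is correct and takes the same route as the paper: the forward implication is exactly Theorem \ref{thmmain} restricted to dimension $n$, and the reverse implication is the specialization of Conjecture \ref{conjnonvanish}$_{n}$ to $(X,0)$ together with the standard $\mathbb{R}$-to-$\mathbb{Q}$ upgrade, which the paper treats as immediate but you correctly identify and spell out. Your rational-polytope argument is sound: since $K_{X}$ is a $\mathbb{Z}$-divisor and the space of $\mathbb{R}$-divisors supported on $\operatorname{Supp} D$ that are $\mathbb{R}$-linearly trivial is a rational linear subspace, the set of effective $D'$ supported on $\operatorname{Supp} D$ with $K_{X}\sim_{\mathbb{R}}D'$ is a nonempty rational polytope (it contains $D$), and any rational point $D'$ of it satisfies $K_{X}\sim_{\mathbb{Q}}D'$ because a $\mathbb{Q}$-divisor that is $\mathbb{R}$-linearly trivial is $\mathbb{Q}$-linearly trivial.
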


\begin{cor}\label{cor2}
Conjecture \ref{conjnonvanishsmooth}$_{n}$ implies Conjecture \ref{conjlogminimalmodel}$_{n}$. 
\end{cor}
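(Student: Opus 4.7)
The plan is to deduce this corollary as an immediate consequence of Theorem \ref{thmmain}. Recall the notational convention announced at the start of Section 1: the subscript $\leq n$ means that the dimension of $X$ is at most $n$, while the subscript $n$ means that the dimension is exactly $n$. Hence Conjecture \ref{conjlogminimalmodel}$_{n}$ is precisely the restriction of Conjecture \ref{conjlogminimalmodel}$_{\leq n}$ to projective log canonical pairs $(X,\Delta)$ with $\dim X = n$.

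The single step is then to apply Theorem \ref{thmmain}, which under the hypothesis Conjecture \ref{conjnonvanishsmooth}$_{n}$ yields Conjecture \ref{conjlogminimalmodel}$_{\leq n}$. Specializing this conclusion to pairs of dimension exactly $n$ gives Conjecture \ref{conjlogminimalmodel}$_{n}$, which is what we want. There is no real obstacle to overcome here; the entire content lies in Theorem \ref{thmmain}, and the corollary isolates one of its two main consequences (the other being Conjecture \ref{conjnonvanish}$_{\leq n}$, recorded separately as Corollary \ref{cor1}). It is worth noting, however, that the corollary is strictly weaker than Theorem \ref{thmmain}: the theorem also supplies the minimal model statement in all lower dimensions, whereas the corollary retains only the top-dimensional case, which is typically the form most often invoked in applications.
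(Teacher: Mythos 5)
Your proof is correct and is the same one-line deduction the paper gives (``It immediately follows from Theorem \ref{thmmain}''); you have simply spelled out the trivial specialization from dimension $\leq n$ to dimension $= n$.
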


Corollary \ref{cor1} is a generalization of \cite[Theorem 1.5]{gongyo-nonvanishing} and \cite[Theorem 8.8]{dhp}, and Corollary \ref{cor2} is a generalization of \cite[Theorem 1.4]{birkar-existII}. 
We emphasize that by Corollary \ref{cor2} we can reduce the log minimal model conjecture for log canonical pairs to the non-vanishing conjecture in very simple situation. 

The contents of this paper are as follows: 
In Section \ref{secpre}, we collect some notations and definitions, and we recall two  important theorems (cf.~Theorem \ref{thmacclct} and Theorem \ref{thmglobalacc}). 
In Section \ref{secmain} we prove Theorem \ref{thmmain}, Corollary \ref{cor1} and Corollary \ref{cor2}. 

\begin{ack}
The author was partially supported by JSPS KAKENHI Grant Number JP16J05875 from JSPS. 
The author would like to thank his supervisor Osamu Fujino for useful advice. 
He is grateful to Professor Caucher Birkar for comments. 
He also thank the referee for useful comments and suggestions. 
\end{ack}

\section{Preliminaries}\label{secpre}
In this section we collect some notations and definitions. 
We will freely use the notations and definitions in \cite{bchm}. 
%We can also find in \cite{hashizume} the basic definitions of divisors. 

\begin{say}[Maps]
Let $f:X\to Y$ be a morphism of normal projective varieties. 
Then $f$ is a {\em contraction} if $f$ is surjective and it has connected fibers. 

Let $f:X\dashrightarrow Y$ be a birational map of normal projective varieties. 
Then $f$ is a {\em birational contraction} if $f^{-1}$ does not contract any divisors. 
Let $D$ be an $\mathbb{R}$-divisor on $X$. 
Unless otherwise stated, we mean $f_{*}D$ by denoting $D_{Y}$. 
\end{say}

\begin{say}[Singularities of pairs]
A {\em pair} $(X,\Delta)$ consists of a normal variety $X$ and a boundary $\mathbb{R}$-divisor $\Delta$, that is, an $\mathbb{R}$-divisor whose coefficients belong to $[0,1]$, on $X$ such that $K_{X}+\Delta$ is $\mathbb{R}$-Cartier. 

Let $(X, \Delta)$ be a pair and $f:Y\to X$ be a log resolution of $(X, \Delta)$. 
Then we can write 
$$K_Y=f^*(K_X+\Delta)+\sum_{i} a(E_{i}, X,\Delta) E_i$$ 
where $E_{i}$ are prime divisors on $Y$ and $a(E_{i}, X,\Delta)$ is a real number for any $i$. 
Then we call $a(E_{i}, X,\Delta)$ the {\em discrepancy} of $E_{i}$ with respect to $(X,\Delta)$. 
The pair $(X, \Delta)$ is called {\it Kawamata log terminal} ({\it klt}, for short) if $a(E_{i}, X, \Delta) > -1$ for any log resolution $f$ of $(X, \Delta)$ and any $E_{i}$ on $Y$. 
$(X, \Delta)$ is called {\it log canonical} ({\it lc}, for short) if $a(E_{i}, X, \Delta) \geq -1$ for any log resolution $f$ of $(X, \Delta)$ and any $E_{i}$ on $Y$. 
$(X, \Delta)$ is called {\it divisorial log terminal} ({\it dlt}, for short) if  there exists a log resolution $f:Y \to X$ of $(X, \Delta)$ such that $a(E, X, \Delta) > -1$ for any $f$-exceptional prime divisor $E$ on $Y$.
\end{say}

Next we introduce the definition of some models. 
For some remarks of the models, see \cite[Remark 2.7]{hashizume}.

\begin{defin}[Weak lc models and log minimal models]\label{defnmodels}
Let $(X,\Delta)$ be a projective log canonical pair and $\phi:X \dashrightarrow X'$ be a birational map to a normal projective variety $X'$. 
Let $E$ be the reduced $\phi^{-1}$-exceptional divisor on $X'$, that is, $E=\sum E_{j}$ where $E_{j}$ are $\phi^{-1}$-exceptional prime divisors on $X'$. 
Then the pair $(X', \Delta'=\phi_{*}\Delta+E)$ is called a {\em log birational model} of $(X,\Delta)$. 
A log birational model $(X', \Delta')$ of $(X,\Delta)$ is a {\em weak log canonical model} ({\em weak lc model}, for short) if 
\begin{itemize}
\item
$K_{X'}+\Delta'$ is nef, and 
\item
for any prime divisor $D$ on $X$ which is exceptional over $X'$, we have
$$a(D, X, \Delta) \leq a(D, X', \Delta').$$ 
\end{itemize}
A weak lc model $(X',\Delta')$ of $(X,\Delta)$ is a {\em log minimal model} if 
\begin{itemize}
\item
$X'$ is $\mathbb{Q}$-factorial, and 
\item
the above inequality on discrepancies is strict. 
\end{itemize}
A log minimal model $(X',\Delta')$ of $(X, \Delta)$ is called a {\em good minimal model} if $K_{X'}+\Delta'$ is semi-ample.    
\end{defin}

\begin{defin}[Mori fiber spaces]\label{defnmorifiberspace} 
Let $(X,\Delta)$ be a projective log canonical pair and $(X',\Delta')$ be a log birational model of $(X,\Delta)$. 

Then $(X',\Delta')$ is called a {\em Mori fiber space} if $X'$ is $\mathbb{Q}$-factorial and there is a contraction $X' \to W$ with ${\rm dim}\,W<{\rm dim}\,X'$ such that 
\begin{itemize}
\item
the relative Picard number $\rho(X'/W)$ is one and $-(K_{X'}+\Delta')$ is ample over $W$, and 
\item
for any prime divisor $D$ over $X$, we have
$$a(D,X,\Delta)\leq a(D,X',\Delta')$$
and strict inequality holds if $D$ is a divisor on $X$ and exceptional over $X'$.
\end{itemize} 
\end{defin}

Note that our definition of log minimal models is slightly different from that of \cite%[Definition 2.1]
{birkar-flip}. 
%In \cite{birkar-flip}, log minimal models are supposed to be dlt. 
%i.e.,
The difference is we do not assume that log minimal models are dlt. 
But this difference is intrinsically not important (cf.~\cite[Remark 2.7]{hashizume}). 
In our definition, any weak lc model $(X',\Delta')$ of a $\mathbb{Q}$-factorial lc pair $(X,\Delta)$ constructed with the $(K_{X}+\Delta)$-MMP
%obtained by running the $(K_{X}+\Delta)$-MMP on $\mathbb{Q}$-factorial lc pairs are
is a log minimal model of $(X,\Delta)$ even though $(X',\Delta')$ may not be dlt.

%\begin{defn}[Log smooth models, cf.~{\cite[Definition 2.3]{birkar-flip},\,\cite[Definition 2.12]{hashizume}}]\label{defnlogsmoothmodel}Let $(X,\Delta)$ be a log canonical pair and $f:Y \to X$ be a log resolution of $(X,\Delta)$. Let $\Gamma$ be a boundary $\mathbb{R}$-divisor on $Y$ such that $(Y,\Gamma)$ is log smooth. Then $(Y,\Gamma)$ is a {\em log smooth model} of $(X,\Delta)$ if we write $$K_{Y}+\Gamma=f^{*}(K_{X}+\Delta)+F, $$then\begin{enumerate}\item[(i)]$F$ is an effective $f$-exceptional divisor, and \item[(ii)] every $f$-exceptional prime divisor $E$ satisfying $a(E,X,\Delta)>-1$ is a component of $F$ and $\Gamma-\llcorner \Gamma \lrcorner$, where $\llcorner \Gamma \lrcorner$ is the reduced part of $\Gamma$.  \end{enumerate}\end{defn}

Finally we introduce the definition of log canonical thresholds and pseudo-effective thresholds, and two important theorems which are proved by Hacon, M\textsuperscript{c}Kernan and Xu \cite{hmx-acc}. 
%Those theorems play important roles in the proof of Theorem \ref{thmmain}. 

\begin{defin}[Log canonical thresholds, {cf.~\cite{hmx-acc}}]\label{defnlct}
Let $(X,\Delta)$ be a log canonical pair and let $M\neq0$ be an effective $\mathbb{R}$-Cartier $\mathbb{R}$-divisor. 
Then the {\em log canonical threshold} of $M$ with respect to $(X,\Delta)$, denoted by ${\rm lct}(X,\Delta;M)$, is 
$${\rm lct}(X,\Delta;M)={\rm sup}\{t\in \mathbb{R}\mid (X,\Delta+tM) {\rm \; is \; log \;canonical}\}.$$ 
\end{defin}

\begin{defin}[Pseudo-effective thresholds]
Let $(X,\Delta)$ be a projective log canonical pair and $M$ be an effective $\mathbb{R}$-Cartier $\mathbb{R}$-divisor such that $K_{X}+\Delta+tM$ is pseudo-effective for some $t \geq 0$. 
Then the {\em pseudo-effective threshold} of $M$ with respect to $(X,\Delta)$, denoted by $\tau(X,\Delta;M)$, is 
$$\tau(X,\Delta;M)={\rm inf}\{t\in \mathbb{R}_{\geq 0}\mid K_{X}+\Delta+tM {\rm \; is \; pseudo\mathchar`-effective}\}.$$ 
\end{defin}

\begin{thm}[ACC for log canonical thresholds, {cf. \cite[Theorem 1.1]{hmx-acc}}]\label{thmacclct}
Fix a positive integer $n$, a set $I \subset [0,1]$ and a set $J\subset \mathbb{R}_{>0}$, where $I$ and $J$ satisfy the DCC. 
Let $\mathfrak{T}_{n}(I)$ be the set of log canonical pairs $(X,\Delta)$, where $X$ is a variety of dimension $n$ and the coefficients of $\Delta$ belong to $I$. 
Then the set 
$$\{ {\rm lct}(X,\Delta;M) \mid (X,\Delta) \in \mathfrak{T}_{n}(I), {\rm \; the \;coefficients \; of\; }M{\rm \;belong\; to\;}J \}$$
satisfies the ACC.
\end{thm}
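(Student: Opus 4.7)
The statement is the Hacon--M\textsuperscript{c}Kernan--Xu ACC theorem, so I would follow the strategy of \cite{hmx-acc}: induction on $n={\rm dim}\,X$ coupled to the global ACC for numerically trivial lc pairs (Theorem \ref{thmglobalacc}). Assuming the result in dimensions $<n$, I argue by contradiction: suppose there is a sequence $(X_{i},\Delta_{i})\in\mathfrak{T}_{n}(I)$ and divisors $M_{i}$ with coefficients in $J$ such that $t_{i}={\rm lct}(X_{i},\Delta_{i};M_{i})$ is strictly increasing with limit $t_{\infty}$. The plan is to transport the limit configuration onto a divisorial log canonical centre and, via adjunction, reduce to a contradiction in dimension $n-1$.

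\medskip

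The first step is to reduce each example to a normal form. After a log resolution of $(X_{i},\Delta_{i}+M_{i})$ and pullback, I may assume that each $X_{i}$ is smooth and $\Supp(\Delta_{i}+M_{i})$ is simple normal crossing; the new coefficients still lie in DCC sets $I'$, $J'$ depending only on $I$, $J$, and $\{1\}$. By the very definition of the lct, $(X_{i},\Delta_{i}+t_{i}M_{i})$ is lc but has a log canonical place of discrepancy exactly $-1$; using the snc hypothesis I may arrange that this place is realised by a prime divisor $E_{i}\subseteq X_{i}$ with coefficient one in $\Delta_{i}+t_{i}M_{i}$. Writing $\Delta_{i}+t_{i}M_{i}=E_{i}+B_{i}$ and applying adjunction, $K_{E_{i}}+\Theta_{i}=(K_{X_{i}}+\Delta_{i}+t_{i}M_{i})|_{E_{i}}$, where the different $\Theta_{i}$ has coefficients in a DCC set determined by $n$, $I$, $J$ via the standard combinatorial formulas.

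\medskip

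The decisive move exploits the strict inequality $t_{i}<t_{\infty}$. Restricting $(t_{\infty}-t_{i})M_{i}$ to $E_{i}$ produces an additional effective divisor $N_{i}$ on $E_{i}$, and after passing to a subsequence, compactness and boundedness arguments for lc pairs let one extract a limit configuration $(E_{\infty},\Theta_{\infty}+N_{\infty})$ that is either numerically trivial, at which point Theorem \ref{thmglobalacc} forces an ACC contradiction with the DCC sequence $\Theta_{i}+N_{i}$, or of log general type, at which point the inductive ACC for LCTs in dimension $n-1$ applies. The main obstacle is genuinely the global ACC itself: Theorem \ref{thmglobalacc} is the deep engine of \cite{hmx-acc}, requiring Fano-type boundedness and delicate adjunction bookkeeping in limits, and without it the reduction above only produces boundaries whose coefficients satisfy DCC, with no obvious mechanism to promote that to ACC. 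I would therefore take Theorem \ref{thmglobalacc} as a black box and carry out the induction sketched above; anything less amounts to redoing \cite{hmx-acc}.
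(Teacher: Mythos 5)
This statement is not proved in the paper at all; it is cited verbatim from Hacon--M\textsuperscript{c}Kernan--Xu (\cite[Theorem~1.1]{hmx-acc}) as a preliminary, exactly as Theorem~\ref{thmglobalacc} is cited from the same source. There is therefore no ``paper's own proof'' to compare against, and the correct move --- and the one the paper makes --- is simply to quote the theorem.

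Your sketch is a reasonable recollection of the HMX \emph{strategy}, and you are right that its deep input is the global ACC for numerically trivial pairs (Theorem~\ref{thmglobalacc}), which you sensibly declare a black box. But as written the sketch has two soft spots worth flagging. First, after log resolution and adjunction to a divisorial lc place $E_{i}$, you do not automatically land in a situation where $K_{E_{i}}+\Theta_{i}$ (or its modification by $N_{i}$) is numerically trivial; getting to that point in HMX requires running MMPs to suitable models and invoking boundedness of Fano varieties, and your phrase ``compactness and boundedness arguments \ldots let one extract a limit configuration'' hides precisely the hard work. Second, the dichotomy ``numerically trivial or of log general type'' is not the actual case split in HMX: their induction intertwines ACC for lcts, the global ACC, and boundedness in a more delicate way, and ACC for lcts in dimension $n-1$ is not used in the naïve way you invoke it. None of this is fatal --- you yourself note that anything short of a black-box citation amounts to redoing \cite{hmx-acc} --- but the upshot is that for the purposes of this paper the theorem should be cited, not reproved, which is what the author does.
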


\begin{thm}[ACC for numerically trivial pairs, {cf. \cite[Theorem D]{hmx-acc}}]\label{thmglobalacc}
Fix a positive integer $n$ and a set $I \subset [0,1]$, which satisfies the DCC. 

Then there is a finite set $I_{0}\subset I$ with the following property:

If $(X,\Delta)$ is a log canonical pair such that 
\begin{enumerate}
\item[(i)]
X is projective of dimension $n$, 
\item[(ii)]
the coefficients of $\Delta$ belong to $I$, and
\item[(iii)]
$K_{X}+\Delta$ is numerically trivial,
\end{enumerate}
then the coefficients of $\Delta$ belong to $I_{0}$. 
\end{thm}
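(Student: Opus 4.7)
The plan is to prove Theorem~\ref{thmglobalacc} by induction on $n$, with the ACC for log canonical thresholds (Theorem~\ref{thmacclct}) and the MMP machinery of \cite{bchm} as the main tools, and a descent to lower dimension supplying the inductive step.

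\textbf{Reductions.} First I would pass to a $\mathbb{Q}$-factorial dlt modification of $(X,\Delta)$, which preserves numerical triviality of $K_X+\Delta$ and keeps the boundary coefficients in $I\cup\{1\}$. Coefficients equal to $1$ are handled by restricting to a component $S\subset\lfloor\Delta\rfloor$: by adjunction $(K_X+\Delta)|_S=K_S+\Delta_S$ is numerically trivial, and $\Delta_S$ has coefficients in a DCC set derived from $I$ via the different, so the inductive hypothesis in dimension $n-1$ controls that part. Thus we may assume $(X,\Delta)$ is klt.

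\textbf{Contradiction and ACC for lct.} Suppose the conclusion fails. Using the DCC of $I$, extract pairs $(X_i,\Delta_i)$ satisfying (i)--(iii) together with a prime divisor $D_i\subset X_i$ whose coefficient $a_i$ in $\Delta_i$ strictly increases to some $a\leq 1$. Write $\Delta_i=B_i+a_iD_i$. Then $c_i:=\operatorname{lct}(X_i,B_i;D_i)\geq a_i$ by log canonicity of $(X_i,\Delta_i)$, and Theorem~\ref{thmacclct} forces $\{c_i\}$ to stabilize (after a subsequence) at some value $c\geq a$, so $(X_i,B_i+cD_i)$ is lc for all large $i$ with $K_{X_i}+B_i+cD_i\equiv (c-a_i)D_i$ effective and eventually nonzero.

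\textbf{Descent to lower dimension.} Since $K_{X_i}+B_i\equiv -a_iD_i$ is anti-effective, a $(K_{X_i}+B_i)$-MMP should terminate with either a Mori fiber space $X_i'\to Z_i$ with $\dim Z_i<n$, or a divisorial contraction of $D_i$ after which the canonical bundle formula applied to $K_{X_i}+\Delta_i\equiv 0$ produces an lc-trivial fibration with lower-dimensional base. In either case, restricting to a general fiber (if positive-dimensional) or pushing forward via the canonical bundle formula yields a numerically trivial lc pair of dimension $<n$ whose boundary coefficients again lie in a DCC set, and the inductive hypothesis forces $a_i$ into a finite set, contradicting its strict increase. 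The main obstacle is this descent step: the canonical bundle formula produces coefficients on the base as log canonical thresholds of lifted boundary components, and one must verify these still lie in a DCC set. This requires a uniform version of Theorem~\ref{thmacclct} along the fibration together with boundedness of the vertical divisorial contributions, and is the technical heart of the argument in \cite{hmx-acc}.
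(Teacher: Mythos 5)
This theorem is not proved in the paper at all: it is quoted verbatim (with the label ``cf.\ \cite[Theorem D]{hmx-acc}'') as a black-box input, and its proof is deferred entirely to Hacon--M\textsuperscript{c}Kernan--Xu. There is therefore no internal argument of the paper to compare your sketch against; the most useful review I can give concerns how your outline relates to what is actually done in \cite{hmx-acc}.

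Your reductions (pass to a $\mathbb{Q}$-factorial dlt modification, peel off coefficient-one components by adjunction and the inductive hypothesis in dimension $n-1$, reduce to klt) are in the right spirit, and the contradiction setup via a strictly increasing sequence of coefficients is a standard opening move. However, the ``descent to lower dimension'' step is where the proposal is not actually a proof, and you flag this yourself. Two concrete issues: (a) running a $(K_{X_i}+B_i)$-MMP requires first knowing that such an MMP terminates, which in full generality is as open as the theorem itself; the usable versions require bigness of the boundary or scaling, and you do not arrange that. (b) Even granting a Mori fibre space or an lc-trivial fibration, the canonical bundle formula does not straightforwardly put the discriminant coefficients back into a DCC set controlled by $I$; proving that the relevant moduli/discriminant coefficients lie in a fixed DCC set, uniformly over a sequence of pairs, is precisely the content of the global ACC being proved and cannot be invoked without circularity. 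In \cite{hmx-acc} the argument does not proceed by fibration descent alone: it is an intertwined induction in which ACC for log canonical thresholds and the global ACC statement are proved simultaneously, using boundedness of log pairs (Alexeev-type results and the boundedness of the moduli part after base change) to reduce to families over a fixed base, and then a Noetherian/constructibility argument to get finiteness. Your sketch captures the inductive flavor but omits the boundedness input, which is the essential ingredient that makes the lower-dimensional reduction actually close. So: the approach is directionally reasonable as an outline, but as a proof it has a genuine gap exactly where you say it does, and filling it is the main content of \cite{hmx-acc}.
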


\section{Proof of Theorem \ref{thmmain} and corollaries}\label{secmain}
In this section we prove Theorem \ref{thmmain}, Corollary \ref{cor1} and Corollary \ref{cor2}. 

First we recall the following theorem proved by Birkar, which plays a crucial role in the proof of Theorem \ref{thmmain}. 

\begin{thm}[cf.~{\cite[Corollary 1.7]{birkar-existII}}]\label{thmbirkar}
Fix a positive integer $d$, and assume Conjecture \ref{conjlogminimalmodel}$_{\leq d-1}$. 
Let $(X,\Delta)$ be a $d$-dimensional projective log canonical pair such that $K_{X}+\Delta \sim_{\mathbb{R}}D$ for an effective $\mathbb{R}$-divisor $D$. 
Then Conjecture \ref{conjlogminimalmodel} holds for $(X,\Delta)$. 
\end{thm}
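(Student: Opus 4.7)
The plan is to follow Birkar's inductive strategy: induct on the number of prime components of $D$ not contained in $\lfloor\Delta\rfloor$, and at each step run an MMP whose termination is controlled by special termination together with the inductive hypothesis.

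As a first reduction, I would take a $\mathbb{Q}$-factorial dlt blow-up $\pi\colon(\tilde X,\tilde\Delta)\to(X,\Delta)$. Since $K_{\tilde X}+\tilde\Delta=\pi^{*}(K_{X}+\Delta)\sim_{\mathbb{R}}\pi^{*}D\geq 0$, the problem transfers to $(\tilde X,\tilde\Delta)$, and a log minimal model of $(\tilde X,\tilde\Delta)$ yields one for $(X,\Delta)$. So I may assume from the outset that $(X,\Delta)$ is $\mathbb{Q}$-factorial dlt.

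Let $n(D)$ denote the number of prime components of $D$ not contained in $\lfloor\Delta\rfloor$; I would induct on $n(D)$. In the base case $n(D)=0$, so $\Supp(D)\subseteq \Supp(\lfloor\Delta\rfloor)$, I would run a $(K_{X}+\Delta)$-MMP with scaling of a general ample divisor. Every extremal ray contracted or flipped has negative intersection with $K_{X}+\Delta\sim_{\mathbb{R}}D$, hence must meet a component of $\lfloor\Delta\rfloor$. The MMP then falls within the scope of special termination: restriction to a dlt stratum yields an MMP for a pair of dimension $\leq d-1$, which terminates by the assumed Conjecture \ref{conjlogminimalmodel}$_{\leq d-1}$. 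For the inductive step, pick a prime component $S$ of $D$ with $a:=\mathrm{coeff}_{S}\Delta<1$ and consider the enlarged pair $(X,\Delta+(1-a)S)$. It is lc, satisfies $K_{X}+\Delta+(1-a)S\sim_{\mathbb{R}}D+(1-a)S\geq 0$, and has strictly smaller invariant $n$. By induction it has a log minimal model, from which I would run a further MMP with scaling that decreases the coefficient of $S$ from $1$ back to $a$, transporting it into a log minimal model of $(X,\Delta)$; the ACC for log canonical thresholds (Theorem \ref{thmacclct}) guarantees that only finitely many scaling values occur along the way.

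The main obstacle is termination of both MMPs: the initial one via special termination, and the final scaling MMP that transports the log minimal model back from $(X,\Delta+(1-a)S)$ to $(X,\Delta)$. Both reductions pass through the inductive hypothesis Conjecture \ref{conjlogminimalmodel}$_{\leq d-1}$ together with the ACC theorems recalled in the previous section.
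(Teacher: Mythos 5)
The paper does not prove this theorem at all; it simply cites it as \cite[Corollary~1.7]{birkar-existII} and uses it as a black box (``First we recall the following theorem proved by Birkar\dots''). So there is no internal proof to compare against. That said, your sketch is a reasonable high-level reconstruction of Birkar's actual argument in [B1]: reduce to a $\mathbb{Q}$-factorial dlt model, induct on the number of components of $D$ not contained in $\lfloor\Delta\rfloor$, and in the base case exploit that every $(K_X+\Delta)$-negative extremal ray is $D$-negative and hence meets $\lfloor\Delta\rfloor$.

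There is, however, a real gap in the way you invoke special termination in the base case. Shokurov's special termination, as classically stated, requires \emph{termination of log flips} in dimension $\leq d-1$, which is strictly stronger than the assumed Conjecture~\ref{conjlogminimalmodel}$_{\leq d-1}$ (existence of log minimal models). You cannot simply say ``restriction to a dlt stratum yields an MMP of dimension $\leq d-1$, which terminates by the assumed conjecture'': existence of a log minimal model of the restricted pair does not by itself show that the induced sequence of restricted birational transformations stops. Overcoming precisely this difficulty is the main technical content of [B1, Theorem 1.5]: Birkar runs the MMP \emph{with scaling} of an ample divisor and carefully analyses the scaling coefficients $\lambda_i$, combining the lower-dimensional existence of log minimal models with finiteness results from [BCHM] and the structure of $\lfloor\Delta\rfloor$ to show the $\lambda_i$ stabilise, and then extracts a log minimal model. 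A similar caveat applies to your final ``scaling MMP that decreases the coefficient of $S$ from $1$ back to $a$'': the ACC for log canonical thresholds controls which boundary coefficients appear, but by itself it does not force the scaling sequence to be finite; one again needs the scaling machinery. To make this a complete proof you would need to reproduce that analysis rather than appeal to special termination off the shelf.
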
 

The following lemma is known to the experts, but we write details of proof for reader's convenience. 

\begin{lem}\label{lem1}
Conjecture \ref{conjnonvanishsmooth}$_{n}$ implies Conjecture \ref{conjnonvanishsmooth}$_{\leq n}$. 
\end{lem}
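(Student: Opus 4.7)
The plan is to reduce from arbitrary dimension $d\le n$ to the single dimension $n$ by taking a product with a variety whose canonical class is trivial. Let $Y$ be a smooth projective variety of dimension $d\le n$ with $K_Y$ pseudo-effective. If $d=n$ I invoke Conjecture \ref{conjnonvanishsmooth}$_{n}$ directly, so assume $d<n$. I would choose an abelian variety $A$ of dimension $n-d$ (for instance a product of $n-d$ elliptic curves) and set $X=Y\times A$ with first projection $p\colon X\to Y$. Since $A$ is smooth with $K_A=0$, the variety $X$ is smooth projective of dimension $n$ and $K_X\sim p^{*}K_Y$; in particular $K_X$ is pseudo-effective, so the hypothesis applies to $X$.

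Conjecture \ref{conjnonvanishsmooth}$_{n}$ applied to $X$ yields an effective $\mathbb{Q}$-divisor $D$ on $X$ with $K_X\sim_{\mathbb{Q}}D$. The second step is to descend $D$ to $Y$. I would pick a positive integer $N$ such that $ND$ is an integral divisor with $ND\sim p^{*}(NK_Y)$. Because $p$ is proper with connected fibres and $H^{0}(A,\mathcal{O}_A)=\mathbb{C}$, one has $p_{*}\mathcal{O}_X=\mathcal{O}_Y$, and the projection formula gives an isomorphism
$$H^{0}(X,ND)\;\cong\;H^{0}\bigl(X,p^{*}(NK_Y)\bigr)\;\cong\;H^{0}(Y,NK_Y).$$
The tautological nonzero section of $\mathcal{O}_X(ND)$ coming from the effectivity of $ND$ corresponds under this isomorphism to a nonzero section of $\mathcal{O}_Y(NK_Y)$, hence to an effective divisor $E$ on $Y$ with $E\sim NK_Y$. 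Therefore $K_Y\sim_{\mathbb{Q}}\tfrac{1}{N}E$ is effective, which is exactly what Conjecture \ref{conjnonvanishsmooth}$_{d}$ demands.

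I do not anticipate a serious obstacle. The only inputs are the existence of an abelian variety of any prescribed positive dimension and the identity $p_{*}\mathcal{O}_X=\mathcal{O}_Y$ for a product projection, both of which are standard. The only minor subtlety is to ensure that one extracts an \emph{honest} effective divisor on $Y$, rather than merely a pullback class, from the effective divisor upstairs; the projection-formula step does precisely this by transferring a concrete global section of $\mathcal{O}_X(ND)$ to one of $\mathcal{O}_Y(NK_Y)$.
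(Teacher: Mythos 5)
Your argument is the same as the paper's: form the product with an $(n-d)$-dimensional abelian variety, note $K_X \sim p^*K_Y$ is pseudo-effective, apply the conjecture in dimension $n$, and descend the section. The paper leaves the descent implicit, whereas you justify it explicitly via $p_*\mathcal{O}_X=\mathcal{O}_Y$ and the projection formula; this is a correct and welcome elaboration of the same idea.
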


\begin{proof}
Assume Conjecture \ref{conjnonvanishsmooth}$_{n}$ and pick any $d\leq n$.
Let $X$ be a smooth projective variety of dimension $d$ such that $K_{X}$ is pseudo-effective. 
Let $W$ be the product of $X$ and an $(n-d)$-dimensional abelian variety, 
and let $f:W \to X$ be the projection. 
Then $K_{W}=f^{*}K_{X}$ and  $K_{W}$ is pseudo-effective. 
Since we assume Conjecture \ref{conjnonvanishsmooth}$_{n}$, 
Conjecture \ref{conjnonvanishsmooth} holds for $W$, and therefore 
Conjecture \ref{conjnonvanishsmooth} holds for $X$. 
So we are done. 
\end{proof}

From now on we prove Theorem \ref{thmmain}. 
We fix $n$ in Theorem \ref{thmmain}. 

\begin{proof}[Proof of Theorem \ref{thmmain}]
By Lemma \ref{lem1} we may assume Conjecture \ref{conjnonvanishsmooth}$_{\leq n}$. 
Pick any positive integer $d\leq n$. 
We prove that Conjecture \ref{conjnonvanish}$_{d}$ and Conjecture \ref{conjlogminimalmodel}$_{d}$ hold under the assumption that Conjecture \ref{conjnonvanish}$_{\leq d-1}$ and Conjecture \ref{conjlogminimalmodel}$_{\leq d-1}$ hold. 
If we can prove this then %we see that Conjecture \ref{conjlogminimalmodel}$_{d}$ follows from Conjecture \ref{conjnonvanish}$_{\leq d-1}$ and Conjecture \ref{conjlogminimalmodel}$_{\leq d-1}$ (cf.~Theorem \ref{thmbirkar}), and therefore 
Theorem \ref{thmmain} immediately follows. 
By Theorem \ref{thmbirkar} we see that Conjecture \ref{conjlogminimalmodel}$_{\leq d-1}$ and Conjecture \ref{conjnonvanish}$_{d}$ imply Conjecture \ref{conjlogminimalmodel}$_{d}$.  
Therefore it is sufficient to prove that Conjecture \ref{conjnonvanish}$_{\leq d-1}$ and Conjecture \ref{conjlogminimalmodel}$_{\leq d-1}$ imply Conjecture \ref{conjnonvanish}$_{d}$. 

%We can also assume Conjecture \ref{conjlogminimalmodel}$_{\leq d-1}$ by Theorem \ref{thmbirkar}. 

Let $(X,\Delta)$ be a $d$-dimensional lc pair. 
%By Theorem \ref{thmbirkar}, we only have to prove that Conjecture \ref{conjnonvanish} holds for $(X,\Delta)$. 
By taking a dlt blow-up, we can assume that $(X,\Delta)$ is a  $\mathbb{Q}$-factorial dlt pair. 
We can write $\Delta=S+B$, where $S$ is the reduced part of $\Delta$ and $B=\Delta-S$. 
Then we have following two cases. 

\begin{case}\label{case1}
$S\neq0$ and $\tau(X,B\,;S)=1$, where $\tau(X,B\,;S)$ is the pseudo-effective threshold of $S$ with respect to $(X,B)$.
\end{case}
\begin{case}\label{case2}
$S \neq0$ and $\tau(X,B\,;S)<1$, or $S=0$.  
\end{case}

\begin{proof}[Proof of Case \ref{case1}]
We prove it with several steps.
\begin{step2}\label{step0lc}
From this step to Step \ref{step3lc}, we prove that Conjecture \ref{conjlogminimalmodel} holds for $(X,\Delta)$. 

We run the $(K_{X}+\Delta)$-MMP with scaling of an ample divisor $H$ 
$$(X,\Delta)\dashrightarrow \cdots \dashrightarrow (X^{i},\Delta_{X^{i}})\dashrightarrow \cdots.$$ 
Then for any $i$, the birational map $X\dashrightarrow \cdots \dashrightarrow X^{i}$ is also a finitely many steps of the $(K_{X}+\Delta-tS)$-MMP for any sufficiently small $t>0$.  
Since $K_{X}+\Delta-tS$ is not pseudo-effective by hypothesis, we see that $S_{X^{i}}\neq0$ and $\tau(X^{i},B_{X^{i}}\,;S_{X^{i}})=1$ for any $i$. 
Therefore we can replace $(X,\Delta)$ with $(X^{i},\Delta_{X^{i}})$ for some $i\gg0$  and we may assume that there is a big divisor $H$ such that $K_{X}+\Delta+\delta H$ is movable for any sufficiently small $\delta>0$. 
\end{step2}
 
\begin{step2}\label{step1lc} 
Fix $A\geq0$ a general ample $\mathbb{Q}$-divisor such that $(X,\Delta+A)$ is lc, $(X,B+A)$ is klt  and $(1/2)A+K_{X}+B$ and $(1/2)A+S$ are both nef. 
Then 
$$K_{X}+tS+B+A=\Bigl(\frac{1}{2}A+K_{X}+B\Bigr)+t\Bigl(\frac{1}{2}A+S \Bigr)+\frac{1}{2}(1-t)A$$
is nef for any $0\leq t \leq 1$. 
Let $\tau_{t}=\tau(X,tS+B\,;A)$ be the pseudo-effective threshold of $A$ with respect to $(X,tS+B)$ for any $0\leq t < 1$. 
By construction we have $0<\tau_{t}\leq1$ for any $t$. 
In this step we prove that there is $0<\epsilon< 1$ such that the divisor
\begin{equation*}
\begin{split}
&K_{X}+(1-t(1-\epsilon))S+B+t\tau_{\epsilon}A\\
&=(1-t)(K_{X}+\Delta)+t(K_{X}+\epsilon S +B +\tau_{\epsilon}A)
\end{split}
\end{equation*}
is not big for any $0\leq t \leq 1$. 

The idea is similar to \cite[Step 2 and Step 3 in the proof of Proposition 5.3]{hashizume} (see also the proof of \cite[Proposition 8.7]{dhp} or \cite[Lemma 3.1]{gongyo-nonvanishing}). 
Let $\{ u_{k}\}_{k\geq1}$ be a strictly increasing infinite sequence of positive real numbers such that $u_{k}<1$ for any $k$ and ${\rm lim}_{k\to \infty}u_{k}=1$. 
For each $k$, we run the $(K_{X}+u_{k}S+B)$-MMP with scaling of $A$. 
Then we get a Mori fiber space $(X,u_{k}S+B) \dashrightarrow (X_{k},u_{k}S_{X_{k}}+B_{X_{k}})\to Z_{k}$. 
By the basic property of the log MMP with scaling, $K_{X_{k}}+u_{k}S_{X_{k}}+B_{X_{k}}+\tau_{u_{k}}A_{X_{k}}$ is trivial over $Z_{k}$. 
By replacing $\{u_{k}\}_{k\geq1}$ with its subsequence we may assume that ${\rm dim}\,Z_{k}$ is constant. 

We note that $S_{X_{k}}\neq 0$ and $S_{X_{k}}$ is ample over $Z_{k}$ by construction. 
Since ${\rm lim}_{k\to \infty}u_{k}=1$, by the ACC for log canonical thresholds (cf.~Theorem \ref{thmacclct}), there are infinitely many indices $k$ such that ${\rm lct}(X_{k},B_{X_{k}};S_{X_{k}})=1$. 
Therefore, by replacing $\{u_{k}\}_{k\geq1}$ with its subsequence, we may assume that $(X_{k},\Delta_{X_{k}})$ is lc for any $k$. 
%We consider the set $$\{\mu\in \mathbb{R}_{\geq0}\mid \mu={\rm lct}(X_{k},B_{X_{k}};S_{X_{k}})\; {\rm for \;some}\; k\geq1\}.$$By construction we have ${\rm lct}(X_{k},B_{X_{k}};S_{X_{k}})\geq t_{k}$ for any $k$. Moreover the coefficients of $B_{X_{k}}$ and $S_{X_{k}}$ belong to a finite set which does not depend on $k$. 
Moreover, by applying the ACC for numerically trivial pairs (cf.~Theorem \ref{thmglobalacc}) to the general fibers of Mori fiber spaces $(X_{k},u_{k}S_{X_{k}}+B_{X_{k}})\to Z_{k}$, we can find an index $k$ such that $K_{X_{k}}+\Delta_{X_{k}}$ is numerically trivial over $Z_{k}$. 
In particular $K_{X_{k}}+\Delta_{X_{k}}$ is trivial over $Z_{k}$.

Set $\epsilon=u_{k}$ for this $k$. 
Then 
$$(1-t)(K_{X_{k}}+\Delta_{X_{k}})+t(K_{X_{k}}+\epsilon S_{X_{k}}+B_{X_{k}}+\tau_{\epsilon}A_{X_{k}})$$ 
is trivial over $Z_{k}$ for any $0\leq t \leq 1$. 
Since ${\rm dim}\,Z_{k}<{\rm dim}\,X_{k}$, we see that $K_{X}+(1-t(1-\epsilon))S+B+t\tau_{\epsilon}A$
is not big. 
Note that $0<\tau_{\epsilon}\leq 1$. 
\end{step2} 

\begin{step2}\label{step2lc}
Set $G=(1-\epsilon)S-\tau_{\epsilon}A$. 
Then $(X,\Delta-tG)$ is klt and $\Delta-tG$ is big for any $0<t\leq1$ because $\Delta-tG=(1-t(1-\epsilon))S+B+t\tau_{\epsilon}A$. 
In this step we show that there is an infinite sequence $\{a_{k}\}_{k\geq1}$ of positive real numbers such that 
\begin{enumerate}
\item[(i)]
$a_{k}<1$ for any $k$ and ${\rm lim}_{k\to \infty}a_{k}=0$, and 
\item[(ii)]
there is a finitely many steps of the $(K_{X}+\Delta-a_{k}G)$-MMP to a good minimal model 
$$(X, \Delta-a_{k}G)\dashrightarrow (X'_{k}, \Delta_{X'_{k}}-a_{k}G_{X'_{k}})$$ 
such that $(X'_{k},\Delta_{X'_{k}})$ is lc and there is a contraction $X'_{k}\to Y_{k}$ to a normal projective variety $Y_{k}$ such that 

\begin{enumerate}
\item[(ii-a)]
${\rm dim}\,Y_{k}<{\rm dim}\,X'_{k}$, and 
\item[(ii-b)]
$K_{X'_{k}}+\Delta_{X'_{k}}-a_{k}G_{X'_{k}}$ is $\mathbb{R}$-linearly equivalent to the pullback of an ample $\mathbb{R}$-divisor on $Y_{k}$ and $K_{X'_{k}}+\Delta_{X'_{k}}\sim_{\mathbb{R},\,Y_{k}}0$. 
\end{enumerate} 
\end{enumerate} 

Since $\tau_{\epsilon}>0$ and $A$ is ample,
%The idea is similar to \cite[Step 1 and Step 5 in the proof of Proposition 5.1]{hashizume}. 
by applying \cite[Corollary 1.1.5]{bchm} (see also \cite[Theorem 8.9]{dhp}), there are countably many birational maps $X \dashrightarrow X'_{k}$ and contractions $X'_{k}\to Y_{k}\, (k=1,2,\cdots)$ to normal projective varieties $Y_{k}$ such that 
for any $0<\alpha\leq1$, there is an index $k$ satisfying the following:
\begin{enumerate}
\item[$(\clubsuit)$]
$(X, \Delta-\alpha G)\dashrightarrow (X'_{k}, \Delta_{X'_{k}}-\alpha G_{X'_{k}})$ is a finitely many steps of the $(K_{X}+\Delta-\alpha G)$-MMP to a good minimal model and $K_{X'_{k}}+\Delta_{X'_{k}}-\alpha G_{X'_{k}}$ is $\mathbb{R}$-linearly equivalent to the pullback of an ample divisor on $Y_{k}$.  
\end{enumerate}
Renumbering $X'_{k}$ and $Y_{k}$ if necessary, we can find an infinite sequence $\{a_{k}\}_{k\geq1}$ of positive real numbers such that 
\begin{enumerate}
\item[(1)]
$a_{k}<1$ for any $k$ and ${\rm lim}_{k\to \infty}a_{k}=0$, and 
\item[(2)]
for any $k$ and any $\alpha>0$ sufficiently close to $a_{k}$, the birational map $(X, \Delta-\alpha G)\dashrightarrow (X'_{k}, \Delta_{X'_{k}}-\alpha G_{X'_{k}})$ 
and the contraction $X'_{k}\to Y_{k}$ satisfy $(\clubsuit)$.  
\end{enumerate} 
By construction ${\rm lct}(X'_{k},B_{X'_{k}}\,;S_{X'_{k}})\geq 1-a_{k}(1-\epsilon)$. 
Since ${\rm lim}_{k\to \infty}a_{k}=0$, by the ACC for log canonical thresholds (cf.~Theorem \ref{thmacclct}), there are infinitely many indices $k$ such that ${\rm lct}(X'_{k},B_{X'_{k}};S_{X'_{k}})=1$. 
Therefore, by replacing $\{a_{k}\}_{k\geq1}$ with its subsequence, we may assume that $(X'_{k},\Delta_{X'_{k}})$ is lc for any $k$. 

We show that $\{a_{k}\}_{k \geq 1}$, $(X, \Delta-a_{k} G)\dashrightarrow (X'_{k}, \Delta_{X'_{k}}-a_{k} G_{X'_{k}})$ and $X'_{k}\to Y_{k}$ satisfy all the conditions stated at the start of this step. 
By construction it is sufficient to check that the contraction $X'_{k}\to Y_{k}$ satisfies  conditions (ii-a) and (ii-b) for any $k$. 
First we recall that $K_{X}+\Delta-a_{k}G$ is not big by Step \ref{step1lc}. 
Therefore ${\rm dim}\,Y_{k}<{\rm dim}\,X'_{k}$ and thus the contraction $X'_{k}\to Y_{k}$ satisfies condition (ii-a) for any $k$. 
Next, by condition (2), we can find a positive real number $\widetilde{\alpha}\neq a_{k}$ sufficiently close to $a_{k}$ such that
 $K_{X'_{k}}+\Delta_{X'_{k}}-\widetilde{\alpha}G_{X'_{k}}\sim_{\mathbb{R},\,Y_{k}}0$. 
Because $K_{X'_{k}}+\Delta_{X'_{k}}$ is represented by a linear combination of $K_{X'_{k}}+\Delta_{X'_{k}}-a_{k} G_{X'_{k}}$ and $K_{X'_{k}}+\Delta_{X'_{k}}-\widetilde{\alpha}G_{X'_{k}}$, we have $K_{X'_{k}}+\Delta_{X'_{k}}\sim_{\mathbb{R},\,Y_{k}}0$. 
Therefore $X'_{k}\to Y_{k}$ satisfies condition (ii-b) for any $k$. 

In this way we see that $\{a_{k}\}_{k \geq 1}$, $(X, \Delta-a_{k} G)\dashrightarrow (X'_{k}, \Delta_{X'_{k}}-a_{k} G_{X'_{k}})$ and $X'_{k}\to Y_{k}$ satisfy all the conditions stated at the start of this step. 
Thus we complete this step.  
\end{step2}

\begin{step2}\label{step2.5lc}
In this step we prove that Conjecture \ref{conjlogminimalmodel} holds for $(X'_{k},\Delta_{X'_{k}})$ for any $k$, where $(X'_{k},\Delta_{X'_{k}})$ was constructed in Step \ref{step2lc}. 
By Theorem \ref{thmbirkar}, we may show that Conjecture \ref{conjnonvanish} holds for $(X'_{k},\Delta_{X'_{k}})$ for any $k$. 
In this step we fix $k$. 

Since $K_{X'_{k}}+\Delta_{X'_{k}}\sim_{\mathbb{R},\,Y_{k}}0$ and $K_{X'_{k}}+\Delta_{X'_{k}}-a_{k}G_{X'_{k}}\sim_{\mathbb{R},\,Y_{k}}0$, we have $G_{X'_{k}}=(1-\epsilon)S_{X'_{k}}-\tau_{\epsilon}A_{X'_{k}}\sim_{\mathbb{R},\,Y_{k}}0$. 
Since $A_{X'_{k}}$ is big, we see that $S_{X'_{k}}$ is big over $Y_{k}$. 
Therefore $S_{X'_{k}}\neq 0$ and some component of $S_{X'_{k}}$ dominates $Y_{k}$ because ${\rm dim}\,Y_{k}<{\rm dim}\,X'_{k}$ by condition (ii-a) in Step \ref{step2lc}. 
%To prove that that Conjecture \ref{conjlogminimalmodel} holds for $(X'_{k},\Delta_{X'_{k}})$, we may show that Conjecture \ref{conjnonvanish} holds for $(X'_{k},\Delta_{X'_{k}})$ by Theorem \ref{thmbirkar}. 
Let $f:(V,\Gamma) \to(X'_{k},\Delta_{X'_{k}})$ be a dlt blow-up and let $T$ be a component of $f_{*}^{-1}S_{X'_{k}}$ dominating $Y_{k}$. 
Then we have $K_{V}+\Gamma \sim_{\mathbb{R},\,Y_{k}}0$. 
Let $M$ be an $\mathbb{R}$-divisor on $Y_{k}$ such that $K_{V}+\Gamma$ is $\mathbb{R}$-linearly equivalent to the pullback of $M$. 
Then $M$ is pseudo-effective.
By the adjunction $(T,{\rm Diff}(\Gamma-T))$ is dlt, and $K_{T}+{\rm Diff}(\Gamma-T)$ is pseudo-effective.
Since we assume Conjecture \ref{conjnonvanish}$_{\leq d-1}$, Conjecture \ref{conjnonvanish} holds for $(T,{\rm Diff}(\Gamma-T))$. 
Then there is an effective $\mathbb{R}$-divisor $E$ on $Y_{k}$ such that $M\sim_{\mathbb{R}}E$. 
Therefore Conjecture \ref{conjnonvanish} holds for $(V,\Gamma)$, and hence Conjecture \ref{conjnonvanish} holds for $(X'_{k},\Delta_{X'_{k}})$. 

In this way we see that  Conjecture \ref{conjlogminimalmodel} holds for $(X'_{k},\Delta_{X'_{k}})$. 
\end{step2}

\begin{step2}\label{step3lc}
In this step we show that Conjecture \ref{conjlogminimalmodel} holds for $(X,\Delta)$. 
%We will freely use the notations of Step \ref{step2lc} in the proof. 

We keep the track of \cite[Step 4 in the proof of Theorem 5.1]{hashizume}. 
By replacing $\{a_{k}\}_{k\geq1}$ with its subsequence, we can assume that $X'_{i}$ and $X'_{j}$ are isomorphic in codimension one for any $i$ and $j$. 
Indeed, any prime divisor $P$ contracted by the birational map $X\dashrightarrow X'_{k}$ is a component of $N_{\sigma}(K_{X}+\Delta-a_{k}G)$. 
But since we have 
$$N_{\sigma}(K_{X}+\Delta-a_{k}G)\leq (1-a_{k})N_{\sigma}(K_{X}+\Delta)+a_{k}N_{\sigma}(K_{X}+\Delta-G),$$
$P$ is also a component of $N_{\sigma}(K_{X}+\Delta)+N_{\sigma}(K_{X}+\Delta-G)$, which does not depend on $k$. 
Thus we can assume that $X'_{i}$ and $X'_{j}$ are isomorphic in codimension one by replacing $\{a_{k}\}_{k\geq1}$ with its subsequence. 

By Step \ref{step2.5lc}, $(X'_{1},\Delta_{X'_{1}})$ has a log minimal model. 
Therefore, by \cite[Theorem 4.1 (iii)]{birkar-flip}, we can run the $(K_{X'_{1}}+\Delta_{X'_{1}})$-MMP with scaling of an ample divisor and  get a log minimal model $(X'_{1},\Delta_{X'_{1}})\dashrightarrow (X'',\Delta_{X''})$. 
Then we can check that $(X'',\Delta_{X''}-tG_{X''})$ is klt and $\Delta_{X''}-tG_{X''}$ is big for any sufficiently small $t>0$. 
Fix a sufficiently small positive real number $t\ll a_{1}$. 
By \cite[Corollary 1.4.2]{bchm} and running the $(K_{X''}+\Delta_{X''}-tG_{X''})$-MMP with scaling of an ample divisor, we can get a log minimal model $(X'',\Delta_{X''}-tG_{X''})\dashrightarrow (X''',\Delta_{X'''}-tG_{X'''})$. 
Since $t>0$ is sufficiently small, by the standard argument of the length of extremal rays (cf.~\cite[Proposition 3.2]{birkar-existII}), $K_{X'''}+\Delta_{X'''}$ is nef.
Now we get the following sequence of birational maps 
$$X\dashrightarrow X'_{1}\dashrightarrow X''\dashrightarrow X'''$$ 
where $X\dashrightarrow X'_{1}$ (resp.~$X'_{1}\dashrightarrow X''$, $X''\dashrightarrow X'''$) is a finitely many steps of the $(K_{X}+\Delta-a_{1}G)$-MMP (resp.~the $(K_{X'_{1}}+\Delta_{X'_{1}})$-MMP, the $(K_{X''}+\Delta_{X''}-tG_{X''})$-MMP) to a log minimal model. 

We can show that $X'_{1}$ and $X'''$ are isomorphic in codimension one. 
To see this, we may show that $X'_{1}\dashrightarrow X''$ and $X'' \dashrightarrow X'''$ contain only flips. 
Recall that there is a big divisor $H$ such that $K_{X}+\Delta+\delta H$ in movable for any sufficiently small $\delta>0$, which is stated in Step \ref{step0lc}. 
Since $X \dashrightarrow X'_{1}$ is a birational contraction, $K_{X'_{1}}+\Delta_{X'_{1}}+\delta H_{X'_{1}}$ is movable for any sufficiently small $\delta>0$. 
Then $N_{\sigma}(K_{X'_{1}}+\Delta_{X'_{1}})=0$ and thus $X'_{1}\dashrightarrow X''$  contains only flips. 
Furthermore we see that $N_{\sigma}(K_{X''}+\Delta_{X''}-a_{1}G_{X''})=0$ since $K_{X'_{1}}+\Delta_{X'_{1}}-a_{1}G_{X'_{1}}$ is semi-ample, which is condition (ii) in Step \ref{step2lc}.  
Now we have $N_{\sigma}(K_{X''}+\Delta_{X''})=0$, which follows from that $K_{X''}+\Delta_{X''}$ is nef. 
From these facts we have
\begin{equation*}
\begin{split}
N_{\sigma}&(K_{X''}+\Delta_{X''}-tG_{X''})\\
&\leq \Bigl(1-\frac{t}{a_{1}}\Bigr)N_{\sigma}(K_{X''}+\Delta_{X''})+\frac{t}{a_{1}}N_{\sigma}(K_{X''}+\Delta_{X''}-a_{1}G_{X''})=0
\end{split}
\end{equation*}
and hence $X'' \dashrightarrow X'''$ contains only flips. 
In this way we see that $X'_{1}$ and $X'''$ are isomorphic in codimension one. 

Since ${\rm lim}_{k\to \infty}a_{k}=0$, we have $t\geq a_{k}$ for any $k\gg 0$. 
Then 
$K_{X'''}+\Delta_{X'''}-a_{k}G_{X'''}$ 
is nef for any $k\gg0$ because $K_{X'''}+\Delta_{X'''}$ and $K_{X'''}+\Delta_{X'''}-tG_{X'''}$ are nef. 
Moreover $X'''$ and $X'_{k}$ are isomorphic in codimension one since $X'_{k}$ and $X'_{1}$ are isomorphic in codimension one and $X'_{1}$ and $X'''$ are isomorphic in codimension one. 
We recall that $(X'_{k},\Delta_{X'_{k}}-a_{k}G_{X'_{k}})$ is a log minimal model of $(X,\Delta-a_{k}G)$, which is condition (ii) in Step \ref{step2lc}. 
From these facts, we see that $(X''',\Delta_{X'''}-a_{k}G_{X'''})$ is a log minimal model of $(X,\Delta-a_{k}G)$ for any $k\gg 0$. 
Let $p:W \to X$ and $q:W \to X'''$ be a common resolution of $X \dashrightarrow X'''$. 
Then for any $k\gg 0$, we have 
$$p^{*}(K_{X}+\Delta-a_{k}G)-q^{*}(K_{X'''}+\Delta_{X'''}-a_{k}G_{X'''})\geq 0. $$
By considering the limit $k\to \infty$, we have 
$$p^{*}(K_{X}+\Delta)-q^{*}(K_{X'''}+\Delta_{X'''})\geq 0.$$ 
Since $K_{X'''}+\Delta_{X'''}$ is nef, we see that $(X''',\Delta_{X'''})$ is a weak lc model of $(X,\Delta)$. 
Therefore, by \cite[corollary 3.7]{birkar-flip}, $(X,\Delta)$ has a log minimal model. 
\end{step2} 

\begin{step2}\label{step4lc}
Finally we prove that Conjecture \ref{conjnonvanish} holds for $(X,\Delta)$. 
By running the $(K_{X}+\Delta)$-MMP with scaling of an ample divisor and replacing $(X,\Delta)$ with the resulting log minimal model, we can assume that $K_{X}+\Delta$ is nef. 
Note that after this process $S\neq0$ and the equation $\tau(X,B\,;S)=1$ 
still holds. 
%Therefore $K_{X}+\Delta-tS$ is not pseudo-effective for any $t>0$. 
Pick a sufficiently small positive real number $t$ and run the $(K_{X}+\Delta-tS)$-MMP with scaling of an ample divisor. 
Then we get a Mori fiber space 
$$(X,\Delta-tS)\dashrightarrow (X',\Delta_{X'}-tS_{X'})\to Z.$$ 
Moreover, since $t$ is sufficiently small, $K_{X'}+\Delta_{X'}$ is trivial over $Z$ and Conjecture \ref{conjnonvanish} for $(X,\Delta)$ is equivalent to Conjecture \ref{conjnonvanish} for $(X',\Delta_{X'})$ (see \cite[Proposition 3.2]{birkar-existII}). 
We also see that there is a component of $S_{X'}$ dominating $Z$ because $S_{X'}$ is ample over $Z$. 
%Let $T$ be a component of $S_{X'}$ dominating $Z$. 
By the same arguments as in Step \ref{step2.5lc} we can prove that Conjecture \ref{conjnonvanish} holds for $(X',\Delta_{X'})$ with the adjunction and Conjecture \ref{conjnonvanish}$_{\leq d-1}$. 
Thus Conjecture \ref{conjnonvanish} holds for $(X,\Delta)$ and so we are done.
\end{step2}
\end{proof}

\begin{proof}[Proof of Case \ref{case2}]
In this case we can assume that $(X,\Delta)$ is klt since we only have to prove that Conjecture \ref{conjnonvanish} holds for $(X,\Delta)$.  
Taking a log resolution of $(X,\Delta)$, we can assume that $X$ is smooth. 
We put $\tau=\tau(X,0\,;\Delta)$. 
Then we may assume that $\Delta \neq0$ and $\tau>0$ because otherwise Conjecture \ref{conjnonvanish} for $(X,\Delta)$ is obvious from Conjecture \ref{conjnonvanishsmooth}$_{\leq n}$. 
Moreover we may assume that $\tau=1$ by replacing $(X,\Delta)$ with $(X,\tau\Delta)$. 

We prove Case \ref{case2} with several steps. 
The proof is very similar to the proof of Case \ref{case1} except Step \ref{step2.5klt}. 
In the rest of the proof we will use the fact that $(X,\Delta)$ is $\mathbb{Q}$-factorial klt but we will not use the assumption that $X$ is smooth.  

\begin{step}\label{step0klt}
From this step to Step \ref{step3klt}, we prove that Conjecture \ref{conjlogminimalmodel} holds for $(X,\Delta)$. 

We run the $(K_{X}+\Delta)$-MMP with scaling of an ample divisor $H$ 
$$(X,\Delta)\dashrightarrow \cdots \dashrightarrow (X^{i},\Delta_{X^{i}})\dashrightarrow \cdots.$$ 
By the same argument as in Step \ref{step0lc} in the proof of  Case \ref{case1}, we can replace $(X,\Delta)$ with $(X^{i},\Delta_{X^{i}})$ for some $i\gg0$ and we may assume that there is a big divisor $H$ such that $K_{X}+\Delta+\delta H$ is movable for any sufficiently small $\delta>0$. 
Note that $\Delta\neq0$ and $\tau(X,0\,;\Delta)=1$ still hold after this process. 
\end{step}

\begin{step}\label{step1klt}
Fix $A\geq0$ a general ample $\mathbb{Q}$-divisor such that $(X,\Delta+A)$ is klt and $(1/2)A+K_{X}$ and $(1/2)A+\Delta$ are both nef. 
Then 
$$K_{X}+t\Delta+A=\Bigl(\frac{1}{2}A+K_{X}\Bigr)+t\Bigl(\frac{1}{2}A+\Delta \Bigr)+\frac{1}{2}(1-t)A$$
is nef for any $0\leq t \leq1$. 
We put $\tau_{t}=\tau(X,t\Delta\,;A)$ for any $0\leq t < 1$. 
By construction we have $0<\tau_{t}\leq1$ for any $t$. 
In this step we prove that there is $0<\epsilon< 1$ such that the divisor
$$K_{X}+(1-t(1-\epsilon))\Delta+t\tau_{\epsilon}A=(1-t)(K_{X}+\Delta)+t(K_{X}+\epsilon\Delta+\tau_{\epsilon}A)$$
is not big for any $0\leq t \leq 1$. 

Pick a strictly increasing infinite sequence $\{ u_{k}\}_{k\geq1}$ of positive real numbers such that $u_{k}<1$ for any $k$ and ${\rm lim}_{k\to \infty}u_{k}=1$. 
For each $k$, run the $(K_{X}+u_{k}\Delta)$-MMP with scaling of $A$. 
Then we get a Mori fiber space $(X,u_{k}\Delta) \dashrightarrow (X_{k},u_{k}\Delta_{X_{k}})\to Z_{k}$. 
By the basic property of the log MMP with scaling, $K_{X_{k}}+u_{k}\Delta_{X_{k}}+\tau_{u_{k}}A_{X_{k}}$ is trivial over $Z_{k}$. 
Now we carry out the same arguments as in Step \ref{step1lc} in the proof of Case \ref{case1}, and we can find an index $k$ such that $(X_{k},\Delta_{X_{k}})$ is lc and  $K_{X_{k}}+\Delta_{X_{k}}$ is numerically trivial over $Z_{k}$ by the ACC for log canonical thresholds (cf.~Theorem \ref{thmacclct}) and the ACC for numerically trivial pairs (cf.~Theorem \ref{thmglobalacc}). 
Set $\epsilon=u_{k}$ for this $k$. 
Then we see that
$$(1-t)(K_{X_{k}}+\Delta_{X_{k}})+t(K_{X_{k}}+\epsilon\Delta_{X_{k}}+\tau_{\epsilon}A_{X_{k}})$$ 
is not big, and therefore $K_{X}+(1-t(1-\epsilon))\Delta+t\tau_{\epsilon}A$
is not big. 
Note that $0<\tau_{\epsilon}\leq 1$. 
\end{step}

\begin{step}\label{step2klt}
Set $G=(1-\epsilon)\Delta-\tau_{\epsilon}A$. 
Then $(X,\Delta-tG)$ is klt and $\Delta-tG$ is big for any $0<t\leq1$ because $\Delta-tG=(1-t(1-\epsilon))\Delta+t\tau_{\epsilon}A$. 
In this step we show that there is an infinite sequence $\{a_{k}\}_{k\geq1}$ of positive real numbers such that 
\begin{enumerate}
\item[(i)]
$a_{k}<1$ for any $k$ and ${\rm lim}_{k\to \infty}a_{k}=0$, and 
\item[(ii)]
there is a finitely many steps of the $(K_{X}+\Delta-a_{k}G)$-MMP to a good minimal model 
$$(X, \Delta-a_{k}G)\dashrightarrow (X'_{k}, \Delta_{X'_{k}}-a_{k}G_{X'_{k}})$$ 
such that $(X'_{k},\Delta_{X'_{k}})$ is lc and there is a contraction $X'_{k}\to Y_{k}$ to a normal projective variety $Y_{k}$ such that 

\begin{enumerate}
\item[(ii-a)]
${\rm dim}\,Y_{k}<{\rm dim}\,X'_{k}$, and 
\item[(ii-b)]
$K_{X'_{k}}+\Delta_{X'_{k}}-a_{k}G_{X'_{k}}$ is $\mathbb{R}$-linearly equivalent to the pullback of an ample $\mathbb{R}$-divisor on $Y_{k}$ and $K_{X'_{k}}+\Delta_{X'_{k}}\sim_{\mathbb{R},\,Y_{k}}0$. 
\end{enumerate} 
\end{enumerate} 

The arguments are very similar to Step \ref{step2lc} in the proof of Case \ref{case1}. 
Since $\tau_{\epsilon}>0$ and $A$ is ample,
%The idea is similar to \cite[Step 1 and Step 5 in the proof of Proposition 5.1]{hashizume}. 
by applying \cite[Corollary 1.1.5]{bchm} (see also \cite[Theorem 8.9]{dhp}), there are countably many birational maps $X \dashrightarrow X'_{k}$ and contractions $X'_{k}\to Y_{k}\, (k=1,2,\cdots)$ to normal projective varieties $Y_{k}$ such that 
for any $0<\alpha\leq1$, there is an index $k$ satisfying the following:
\begin{enumerate}
\item[$(\clubsuit)$]
$(X, \Delta-\alpha G)\dashrightarrow (X'_{k}, \Delta_{X'_{k}}-\alpha G_{X'_{k}})$ is a finitely many steps of the $(K_{X}+\Delta-\alpha G)$-MMP to a good minimal model and $K_{X'_{k}}+\Delta_{X'_{k}}-\alpha G_{X'_{k}}$ is $\mathbb{R}$-linearly equivalent to the pullback of an ample divisor on $Y_{k}$.  
\end{enumerate}
Renumbering $X'_{k}$ and $Y_{k}$ if necessary, we can find an infinite sequence $\{a_{k}\}_{k\geq1}$ of positive real numbers such that 
\begin{enumerate}
\item[(1)]
$a_{k}<1$ for any $k$ and ${\rm lim}_{k\to \infty}a_{k}=0$, and 
\item[(2)]
for any $k$ and any $\alpha>0$ sufficiently close to $a_{k}$, the birational map $(X, \Delta-\alpha G)\dashrightarrow (X'_{k}, \Delta_{X'_{k}}-\alpha G_{X'_{k}})$ 
and the contraction $X'_{k}\to Y_{k}$ satisfy $(\clubsuit)$.  
\end{enumerate} 
By construction ${\rm lct}(X'_{k},0\,;\Delta_{X'_{k}})\geq 1-a_{k}(1-\epsilon)$. 
Since ${\rm lim}_{k\to \infty}a_{k}=0$, by the ACC for log canonical thresholds (cf.~Theorem \ref{thmacclct}), there are infinitely many indices $k$ such that ${\rm lct}(X'_{k},0\,;\Delta_{X'_{k}})=1$. 
Therefore, by replacing $\{a_{k}\}_{k\geq1}$ with its subsequence, we may assume that $(X'_{k},\Delta_{X'_{k}})$ is lc for any $k$. 

Furthermore, by the same arguments as in Step \ref{step2lc} in the proof of Case \ref{case1}, we can check that the contraction $X'_{k}\to Y_{k}$
satisfies conditions (ii-a) and (ii-b) for any $k$. 
Note that $K_{X}+\Delta-a_{k}G$ is not big by Step \ref{step1lc}.  
In this way we see that the sequence $\{a_{k}\}_{k \geq 1}$, the birational maps $(X, \Delta-a_{k} G)\dashrightarrow (X'_{k}, \Delta_{X'_{k}}-a_{k} G_{X'_{k}})$ and contractions $X'_{k}\to Y_{k}$ satisfy all the conditions stated at the start of this step. 
Thus we complete this step.  
\end{step}

\begin{step}\label{step2.5klt}
In this step we prove that Conjecture \ref{conjlogminimalmodel} holds for $(X'_{k},\Delta_{X'_{k}})$ for any $k$, where $(X'_{k},\Delta_{X'_{k}})$ was constructed in Step \ref{step2klt}. 
We note that $(X,\Delta)$ is klt but $(X'_{k},\Delta_{X'_{k}})$ may not be klt. 
By Theorem \ref{thmbirkar}, we only have to show that Conjecture \ref{conjnonvanish} holds for $(X'_{k},\Delta_{X'_{k}})$ for any $k$. 
In this step we fix $k$. 

If $(X'_{k},\Delta_{X'_{k}})$ is klt, by applying Ambro's canonical bundle formula (cf.~\cite[Corollary 3.2]{fg-bundle}) to $X'_{k}\to Y_{k}$ and since we assume Conjecture \ref{conjnonvanish}$_{\leq d-1}$, Conjecture \ref{conjnonvanish} holds for $(X'_{k},\Delta_{X'_{k}})$. 
Therefore we may assume that $(X'_{k},\Delta_{X'_{k}})$ is not klt. 

Let $f:(V,\Gamma)\to (X'_{k},\Delta_{X'_{k}})$ be a dlt blow-up of $(X'_{k},\Delta_{X'_{k}})$ and we write $\Gamma=S_{V}+B_{V}$, where $S_{V}$ is the reduced part of $\Gamma$ and $B_{V}=\Gamma-S_{V}$. 
Then $S_{V}\neq0$ by our assumption. 
We may prove that Conjecture \ref{conjnonvanish} holds for $(V,\Gamma)$. 
%If some components of $S_{V}$ dominates $Y_{n}$, then Conjecture \ref{conjnonvanish} for $(V,\Gamma)$ holds by the adjunction and Conjecture \ref{conjnonvanish}$_{\leq d-1}$ because ${\rm dim}\,Y_{k}<{\rm dim}\,X'_{k}$ (see Step \ref{step2.5lc} in the proof of Case \ref{case1}). Therefore we may assume that $V$ is vertical over $Y_{n}$. 
If $\tau(V,B_{V}\,;S_{V})=1$, then Conjecture \ref{conjnonvanish} holds for $(V,\Gamma)$ by Case \ref{case1}. 
Therefore we may assume that $\tau(V,B_{V}\,;S_{V})<1$. 
Note that $K_{V}+\Gamma \sim_{\mathbb{R},\,Y_{k}}0$ by construction. 

Since $K_{X'_{k}}+\Delta_{X'_{k}}\sim_{\mathbb{R},\,Y_{k}}0$ and $K_{X'_{k}}+\Delta_{X'_{k}}-a_{k}G_{X'_{k}}\sim_{\mathbb{R},\,Y_{k}}0$, we have $G_{X'_{k}}=(1-\epsilon)\Delta_{X'_{k}}-\tau_{\epsilon}A_{X'_{k}}\sim_{\mathbb{R},\,Y_{k}}0$. 
Since $A_{X'_{k}}$ is big, we see that $\Delta_{X'_{k}}$ is big over $Y_{k}$. 
Then $\Gamma$ is also big over $Y_{k}$ because $\Gamma$ contains $f_{*}^{-1}\Delta_{X'_{k}}$ and all $f$-exceptional prime divisors. 
We pick sufficiently small positive real number $t<1$ so that $\tau(V,B_{V}\,;S_{V})\leq 1-t$ and $\Gamma -tS_{V}$ is big over $Y_{k}$. 
Then $(V,\Gamma-tS_{V})$ is klt and $K_{V}+\Gamma -tS_{V}$ is pseudo-effective. 
Moreover we see that $K_{V}+\Gamma -tS_{V}$ is not big over $Y_{k}$ because $K_{V}+\Gamma -tS_{V}\sim_{\mathbb{R},\,Y_{k}}-tS_{V}$ and ${\rm dim}\,Y_{k}<{\rm dim}\,V$. 
By construction it is sufficient to prove that Conjecture \ref{conjnonvanish} holds for $(V,\Gamma-tS_{V})$. 

We run the $(K_{V}+\Gamma-tS_{V})$-MMP over $Y_{k}$ with scaling of an ample divisor. 
By \cite[Corollary 1.4.2]{bchm}, we get a good minimal model $(V,\Gamma-tS_{V})\dashrightarrow (V',\Gamma_{V'}-tS_{V'})$ over $Y_{k}$, where $S_{V'}$ is the birational transform of $S_{V}$ on $V'$. 
Then there is a contraction $V' \to \widetilde{Y}$ to a normal projective variety $\widetilde{Y}$ over $Y_{k}$ such that $K_{V'}+\Gamma_{V'}-tS_{V'}\sim_{\mathbb{R},\,\widetilde{Y}}0$. 
We can check that $(V',\Gamma_{V'}-tS_{V'})$ is klt, and furthermore ${\rm dim}\,\widetilde{Y}<{\rm dim}\,V'$ since $K_{V'}+\Gamma_{V'}-tS_{V'}$ is not big over $Y_{k}$. 
Therefore, applying Ambro's canonical bundle formula (cf.~\cite[Corollary 3.2]{fg-bundle}) to $V'\to \widetilde{Y}$ and since we assume Conjecture \ref{conjnonvanish}$_{\leq d-1}$, Conjecture \ref{conjnonvanish} holds for $(V',\Gamma_{V'}-tS_{V'})$. 
Then Conjecture \ref{conjnonvanish} holds for $(V,\Gamma-tS_{V})$, and thus Conjecture \ref{conjnonvanish} holds for $(V,\Gamma_{V})$. 

In this way we see that Conjecture \ref{conjlogminimalmodel} holds for $(X'_{k},\Delta_{X'_{k}})$ for any $k$, and we complete this step. 
\end{step}

\begin{step}\label{step3klt}
In this step we show that Conjecture \ref{conjlogminimalmodel} holds for $(X,\Delta)$. 
The arguments are very similar to Step \ref{step3lc} in the proof of Case \ref{case1}. 
%We will freely use the notations of Step \ref{step2lc} in the proof. 

%We keep the track of \cite[Step 4 in the proof of Theorem 5.1]{hashizume}. 
By replacing $\{a_{k}\}_{k\geq1}$ with its subsequence, we can assume that $X'_{i}$ and $X'_{j}$ are isomorphic in codimension one for any $i$ and $j$. 
Indeed, any prime divisor $P$ contracted by the birational map $X\dashrightarrow X'_{k}$ is a component of $N_{\sigma}(K_{X}+\Delta-a_{k}G)$. 
But since we have 
$$N_{\sigma}(K_{X}+\Delta-a_{k}G)\leq (1-a_{k})N_{\sigma}(K_{X}+\Delta)+a_{k}N_{\sigma}(K_{X}+\Delta-G),$$
$P$ is also a component of $N_{\sigma}(K_{X}+\Delta)+N_{\sigma}(K_{X}+\Delta-G)$, which does not depend on $k$. 
Thus we can assume that $X'_{i}$ and $X'_{j}$ are isomorphic in codimension one by replacing $\{a_{k}\}_{k\geq1}$ with its subsequence. 

Since $(X'_{1},\Delta_{X'_{1}})$ has a log minimal model, by \cite[Theorem 4.1 (iii)]{birkar-flip}, we can run the $(K_{X'_{1}}+\Delta_{X'_{1}})$-MMP with scaling of an ample divisor and  get a log minimal model $(X'_{1},\Delta_{X'_{1}})\dashrightarrow (X'',\Delta_{X''})$. 
Then we can check that $(X'',\Delta_{X''}-tG_{X''})$ is klt and $\Delta_{X''}-tG_{X''}$ is big for any sufficiently small $t>0$. 
Fix a sufficiently small positive real number $t\ll a_{1}$. 
By \cite[Corollary 1.4.2]{bchm} and running the $(K_{X''}+\Delta_{X''}-tG_{X''})$-MMP with scaling of an ample divisor, we can get a log minimal model $(X'',\Delta_{X''}-tG_{X''})\dashrightarrow (X''',\Delta_{X'''}-tG_{X'''})$. 
Since $t>0$ is sufficiently small, by the standard argument of the length of extremal rays (cf.~\cite[Proposition 3.2]{birkar-existII}), we see that $K_{X'''}+\Delta_{X'''}$ is nef.
Now we get the following sequence of birational maps 
$$X\dashrightarrow X'_{1}\dashrightarrow X''\dashrightarrow X'''$$ 
where $X\dashrightarrow X'_{1}$ (resp.~$X'_{1}\dashrightarrow X''$, $X''\dashrightarrow X'''$) is a finitely many steps of the $(K_{X}+\Delta-a_{1}G)$-MMP (resp.~the $(K_{X'_{1}}+\Delta_{X'_{1}})$-MMP, the $(K_{X''}+\Delta_{X''}-tG_{X''})$-MMP) to a log minimal model. 

We can show that $X'_{1}$ and $X'''$ are isomorphic in codimension one. 
To see this, we may show that $X'_{1}\dashrightarrow X''$ and $X'' \dashrightarrow X'''$ contain only flips. 
Recall that there is a big divisor $H$ such that $K_{X}+\Delta+\delta H$ is movable for any sufficiently small $\delta>0$, which is stated in Step \ref{step0klt} in this proof. 
Since $X \dashrightarrow X'_{1}$ is a birational contraction, $K_{X'_{1}}+\Delta_{X'_{1}}+\delta H_{X'_{1}}$ is movable for any sufficiently small  $\delta>0$. 
Then $N_{\sigma}(K_{X'_{1}}+\Delta_{X'_{1}})=0$ and thus $X'_{1}\dashrightarrow X''$  contains only flips. 
Furthermore we see that $N_{\sigma}(K_{X''}+\Delta_{X''}-a_{1}G_{X''})=0$ since $K_{X'_{1}}+\Delta_{X'_{1}}-a_{1}G_{X'_{1}}$ is semi-ample, which is condition (ii) in Step \ref{step2klt}. 
Now we have $N_{\sigma}(K_{X''}+\Delta_{X''})=0$, which follows from that $K_{X''}+\Delta_{X''}$ is nef. 
From these facts, we have 
\begin{equation*}
\begin{split}
N_{\sigma}&(K_{X''}+\Delta_{X''}-tG_{X''})\\
&\leq \Bigl(1-\frac{t}{a_{1}}\Bigr)N_{\sigma}(K_{X''}+\Delta_{X''})+\frac{t}{a_{1}}N_{\sigma}(K_{X''}+\Delta_{X''}-a_{1}G_{X''})=0
\end{split}
\end{equation*}
and hence $X'' \dashrightarrow X'''$ contains only flips. 
In this way we see that $X'_{1}$ and $X'''$ are isomorphic in codimension one. 

Since ${\rm lim}_{k\to \infty}a_{k}=0$, we have $t\geq a_{k}$ for any $k\gg 0$. 
Then 
$K_{X'''}+\Delta_{X'''}-a_{k}G_{X'''}$ 
is nef for any $k\gg0$ because $K_{X'''}+\Delta_{X'''}$ and $K_{X'''}+\Delta_{X'''}-tG_{X'''}$ are nef. 
Moreover $X'''$ and $X'_{k}$ are isomorphic in codimension one since $X'_{k}$ and $X'_{1}$ are isomorphic in codimension one and $X'_{1}$ and $X'''$ are isomorphic in codimension one. 
We recall that $(X'_{k},\Delta_{X'_{k}}-a_{k}G_{X'_{k}})$ is in particular a log minimal model of $(X,\Delta-a_{k}G)$, which is condition (ii) in Step \ref{step2klt}. 
From these facts, we see that $(X''',\Delta_{X'''}-a_{k}G_{X'''})$ is a log minimal model of $(X,\Delta-a_{k}G)$ for any $k\gg 0$. 
Let $p:W \to X$ and $q:W \to X'''$ be a common resolution of $X \dashrightarrow X'''$. 
Then for any $k\gg 0$ we have
$$p^{*}(K_{X}+\Delta-a_{k}G)-q^{*}(K_{X'''}+\Delta_{X'''}-a_{k}G_{X'''})\geq 0. $$
By considering the limit $k\to \infty$, we have 
$$p^{*}(K_{X}+\Delta)-q^{*}(K_{X'''}+\Delta_{X'''})\geq 0.$$ 
Since $K_{X'''}+\Delta_{X'''}$ is nef, we see that $(X''',\Delta_{X'''})$ is a weak lc model of $(X,\Delta)$. 
Therefore, by \cite[corollary 3.7]{birkar-flip}, $(X,\Delta)$ has a log minimal model. 
\end{step} 

\begin{step}\label{step4klt}
Finally we prove that Conjecture \ref{conjnonvanish} holds for $(X,\Delta)$. 
%By \cite[Theorem 4.1 (iii)]{birkar-flip} and running the $(K_{X}+\Delta)$-MMP with scaling of an ample divisor, we get a log minimal model $(X,\Delta)\dashrightarrow (X',\Delta_{X'})$. Then $\Delta_{X'}\neq 0$ and $\tau(X',0\,;\Delta_{X'})=1$. Therefore, by replacing $(X,\Delta)$ with $(X',\Delta_{X'})$, 
By running the $(K_{X}+\Delta)$-MMP with scaling of an ample divisor and replacing $(X,\Delta)$ with the resulting log minimal model, we can assume that $K_{X}+\Delta$ is nef. 
Pick a sufficiently small positive real number $t$ and run the $(K_{X}+(1-t)\Delta)$-MMP with scaling of an ample divisor. 
Since $K_{X}+(1-t)\Delta$ is not pseudo-effective we get a Mori fiber space 
$$(X,(1-t)\Delta)\dashrightarrow (X',(1-t)\Delta_{X'})\to Z.$$ 
Moreover, since $t$ is sufficiently small, $K_{X'}+\Delta_{X'}$ is trivial over $Z$ and Conjecture \ref{conjnonvanish} for $(X,\Delta)$ is equivalent to Conjecture \ref{conjnonvanish} for $(X',\Delta_{X'})$ (see \cite[Proposition 3.2]{birkar-existII}). 
Now we can easily check that Conjecture \ref{conjnonvanish} for $(X',\Delta_{X'})$ holds by Ambro's canonical bundle formula (cf.~\cite[Corollary 3.2]{fg-bundle}) and Conjecture \ref{conjnonvanish}$_{\leq d-1}$. 
So we are done. 
\end{step}
\end{proof}

Therefore we complete the proof of Theorem \ref{thmmain}. 
\end{proof}

\begin{rem}\label{rempethre}
Let $(X,\Delta)$ be a projective $\mathbb{Q}$-factorial log canonical pair such that $(X,0)$ is Kawamata log terminal, and let $A$ be a general sufficiently ample divisor. 
Suppose that $K_{X}+\Delta$ is pseudo-effective and $K_{X}+t\Delta$ is not pseudo-effective for any $t<1$. 
Then as in Step \ref{step1lc} in the proof of Case \ref{case1} (or Case \ref{case2}), we see that pseudo-effective threshold $\tau(X,t\Delta;A)$ is a linear function of $t$ when $t\in [\epsilon,1]$ for some $\epsilon>0$ sufficiently close to one. 
In the proof of Theorem \ref{thmmain} we construct lc-trivial fibrations by using this property. 

We introduce a simple application of the above property. 
Notations as above, assume $\Delta$ and $A$ are $\mathbb{Q}$-divisors. 
Then 
$$\mathcal{E}=\{(a,b)\in[0,1]\times[0,1]\mid K_{X}+a\Delta+bA {\rm \;is\;pseudo\mathchar`-effective}\}$$
is a rational polytope in $[0,1]\times[0,1]$. 
Indeed, if we set $\tau_{\epsilon}=\tau(X,\epsilon\Delta;A)$, where $\epsilon\in\mathbb{Q}$ is as above, then $\tau_{\epsilon}\in\mathbb{Q}$ (cf.~\cite[Proposition 8.7]{dhp}) and 
$$\mathcal{E}\cap\bigl([\epsilon ,1]\times[0,1]\bigr)=\{(a,b)\mid (1-\epsilon)b\geq \tau_{\epsilon}(1-a)\}$$
by the above property. 
On the other hand, if we pick a positive rational number $\epsilon'<\tau_{\epsilon}$, then $K_{X}+\epsilon\Delta+\epsilon'A$ is not pseudo-effective. 
Thus 
$$\mathcal{E}\cap\bigl([0, \epsilon]\times[0,1]\bigr)=\{(a,b+\epsilon')\mid K_{X}+(a\Delta+bA)+\epsilon'A {\rm \;is\;pseudo\mathchar`-effective}\}$$
is a rational polytope by \cite[Corollary 1.1.5]{bchm}. 
In this way we see that $\mathcal{E}$ is a rational polytope in $[0,1]\times[0,1]$. 
\end{rem}

\begin{proof}[Proof of Corollary \ref{cor1}]
It immediately follows from Theorem \ref{thmmain}. 
\end{proof}

\begin{proof}[Proof of Corollary \ref{cor2}]
It immediately follows from Theorem \ref{thmmain}. 
\end{proof}

%%%%%%%%%%%%%%%

\end{document}